\newtheorem{thm}{Theorem}[section]
\newtheorem{lem}[thm]{Lemma}
\newtheorem{cor}[thm]{Corollary}
\newtheorem{Q}[thm]{Question}
\newtheorem{Def}[thm]{Definition}
\newtheorem{prop}[thm]{Proposition}
\newtheorem{rem}[thm]{Remark}
\newtheorem{ex}[thm]{Example}
\newcommand{\bdfn}{\begin{Def} \rm}
\newcommand{\edfn}{\end{Def}}
\newcommand{\beqa}{\begin{eqnarray*}}
\newcommand{\eeqa}{\end{eqnarray*}}
\newcounter{cnt1}
\newcounter{cnt2}
\newcounter{cnt3}
\newcounter{cnt4}
\newcommand{\blr}{\begin{list}{$($\roman{cnt1}$)$} {\usecounter{cnt1}
 \setlength{\topsep}{0pt} \setlength{\itemsep}{0pt}}}
\newcommand{\blR}{\begin{list}{\Roman{cnt4}.\ } {\usecounter{cnt4}
 \setlength{\topsep}{0pt} \setlength{\itemsep}{0pt}}}
\newcommand{\bla}{\begin{list}{$(\alph{cnt2})$} {\usecounter{cnt2}
 \setlength{\topsep}{0pt} \setlength{\itemsep}{0pt}}}
\newcommand{\bln}{\begin{list}{$($\arabic{cnt3}$)$} {\usecounter{cnt3}
 \setlength{\topsep}{0pt} \setlength{\itemsep}{0pt}}}
\newcommand{\el}{\end{list}}
\begin{document}

\title[\tiny{Geometry  of ${\mathcal L}(X,Y^*)$}]{Geometry of the unit ball of ${\mathcal L}(X,Y^*)$}

\author[Rao, Seal]{T. S. S. R. K. Rao$^{1}$,\ Susmita Seal$^{2}$}

\address{{$^{1}$} Department of Mathematics, Shiv Nadar Institution of Eminence. Gautam Buddha Nagar-201314, India}
\email{srin@fulbrightmail.org}

\address {{$^{2}$} School of Mathematical Sciences, National Institute of Science Educational and Research Bhubaneswar, An OCC of Homi Bhabha National Institute, P.O. - Jatni, District - Khurda, Odisha - 752050, India}
		\email{susmitaseal1996@gmail.com}

\subjclass[2020]{Primary 46A22, 46B10, 46B25; Secondary 46B22, 47L05.
\hfill \textbf{\today}
}

\keywords{Projective tensor product, spaces of operators, geometry of Banach spaces, Namioka points }

\maketitle
\begin{abstract}
In this work we study the geometry of the unit ball of the space of operators ${\mathcal L}(X,Y^*)$, by considering the projective tensor product $X\hat{\otimes}_{\pi} Y$ as a predual. We prove that if an elementary tensor (rank one operator) of the form $x_0^*\otimes y_0^* $ in the unit sphere $ S_{{\mathcal L}(X,Y^*)}$ is a weak$^*$-strongly extreme point of the unit ball, then $x_0^*$ is weak$^*$-strongly extreme point  of unit ball of $X^*$ and $y_0^*$ is weak$^*$-strongly extreme point of the unit ball of  $Y^*$. We show that a similar conclusion holds if the rank one operator is a Namioka point (equivalently, point of weak$^*$-weak continuity for the identity mapping) on the unit sphere of ${\mathcal L}(X,Y^*)$. We also study extremal phenomenon in the unit ball of ${\mathcal L}(X,Y^*)^*$. We partly solve the open problem, when does an elementary tensor, whose components are Namioka points is again a Namioka point? We show that if a point $z\in S_{{\mathcal L}(X,Y^*)^*}$ is a weak$^*$-strongly extreme point of the unit ball, then $z=x\otimes y$ for some weak$^*$-strongly extreme points $x\in S_X$ and $y\in S_Y$, provided the space of compact operators, $\mathcal{K}(X,Y^*)$ is separating for $X\hat{\otimes}_{\pi} Y$.
\end{abstract}
\maketitle

\section{Introduction}
Let $X$ be a real Banach space. We  denote the closed unit ball of a Banach space $X$ by $B_X$, the unit sphere by $S_X.$ We first recall some geometric notations that we will be used in this study. 
Let $C$ be a bounded subset of $X,$ $x^* \in X^*$ and $\alpha > 0.$
Then the set
$$S(C, x^*, \alpha) = \{x \in C : x^*(x) > \sup x^*(C) - \alpha \}$$
 is called the
slice determined by $x^*$ and $\alpha.$
 Without loss of generality, for any slice $S(C,x^*,\alpha)$, we can consider the determining functional $x^* \in S_{X^*}$.
 Analogously one can define weak$^*$-slices in $X^*$ by considering the determining functional from the predual space (instead of dual space). Geometric notions of denting points and weak$^*$-denting points (see \cite{DU}) can be described in terms of slices of the unit ball. We say that $x^* \in S_{X^\ast}$ is a weak$^*$-denting point if and only if  weak$^*$-slices containing $x^*$ form a base for the norm neighbourhood system of $x^\ast$. We refer the monographs of D. Bourgin \cite{B1} and J. Diestel and J. J. Uhl \cite{DU} for basic definitions from the geometry of the unit ball of a Banach space, for vector-valued integration theory and tensor product spaces. The monograph \cite{L} by H. E. Lacey is a standard reference for isometric theory of Banach spaces and \cite{H}, \cite{D}  for general theory of Banach spaces.
 \vskip 1em
 Consider the identity mapping, $i: (S_{X^*},weak^*) \rightarrow (S_{X^*},weak)$. A point of continuity (if it exists) is called a Namioka point, see \cite{G} and the recent paper \cite{DMR}. In the case the range space is equipped with the norm topology, we call the point of continuity as weak$^*$-norm PC. The notion of weak-norm PC in $S_X$ is similarly defined.  It is known that a point in the dual unit ball is weak$^*$-denting if and only if it is a weak$^*$-norm PC and an extreme point, see \cite{LLT}.
While the notions of points of  continuity is classical, here we study its stability
in projective tensor product. In particular, we  study the stability of the continuity
of a elementary tensor in the dual of a projective tensor product space and compare its relation to the continuity of the component vectors in the respective unit spheres. See the recent paper \cite{P} where the authors have shown that under the additional assumption that the component spaces have the approximation property, the set of elementary tensors is a weakly closed set.
\vskip 1em

 Let $B(X \times Y )$ be the space of bilinear forms on $X \times Y$. We recall from \cite{R2} that the tensor product of
$X$ and $Y$ is denoted by $X\otimes Y$ and it is the subspace of the algebraic dual of $B(X \times Y )$ spanned by all elements $x\otimes y$ where $x\in X$ and $y\in Y.$  Given the tensor product $X\otimes Y$, the projective norm $\|.\|_{\pi}$ on $X\otimes Y$ is determined by
 $$\|u\|_{\pi}= \inf \{ \sum_{i=1}^{n} \|x_i\|  \|y_i\| : u=\sum_{i=1}^{n} x_i \otimes y_i\}  $$
 The pair $(X\otimes Y, \|.\|_{\pi})$ is sometimes denoted by $X \otimes_{\pi} Y$ and its completion is called projective tensor product, denoted by $X \hat{\otimes}_{\pi} Y.$ We have from \cite[Chapter VIII, Corollary 2]{DU}
  that the space $ {\mathcal L}(X, Y^*)$ is linearly isometric to the dual of $X\hat{\otimes}_\pi Y$. Here we always consider the weak$^*$-topology on ${\mathcal L}(X,Y^*)$ coming from the projective tensor product of $X,Y$.
 \vskip 1em
We start by recalling a classical theorem where ${\mathcal L}(X,Y^\ast)$ has a unique predual and the weak$^*$-topology is uniquely determined. If $Y$ is such that $Y^\ast=C(\Omega)$ for a compact set $\Omega$, then it follows that $\Omega$ is a hyperstonean space \cite[Theorem 11, page 96]{L}
 and for a positive measure $\mu$, $Y = L^1(\mu)$. Now by
 \cite[Chapter VIII, Example 10]{DU} we see that the space of Bochner integrable functions, $L^1(\mu,X)$ is a predual of ${\mathcal L}(X,Y^*)$. When $X$ is reflexive, that
 $L^1(\mu,X)$ is the unique predual of ${\mathcal L}(X, C(\Omega))$ follows from the results in \cite{C} and \cite{R}.
 Hence in this case the weak$^\ast$-topology on ${\mathcal L}(X, C(\Omega))$ is uniquely determined.
 \vskip 1em
 We now briefly recall some literature on extremal structures preserved by projective tensor products.
   It is easy to see that $B_{X \hat{\otimes}_{\pi} Y}= \overline{\mathrm{co}}(B_X \otimes B_Y)$ ($\mathrm{co}$ denotes the convex hull and the closure is taken in the norm topology). However, it is still an open problem, whether every extreme point of $B_{X \hat{\otimes}_{\pi} Y}$ must be of the form $x\otimes y$, for $x\in B_X$ and $y\in B_Y$.
Let  $\mathcal{K}_{w^*}(X^*,Y)^*$ denotes the space of all weak$^*$-weak continuous compact operators. It is known from \cite{CR} and \cite{RS1} that, the extreme points of the unit ball, $\mathrm{ext} (B_{\mathcal{K}_{w^*}(X^*,Y)^*})= \mathrm{ext} (B_{X^*}) \otimes \mathrm{ext} (B_{Y^*}),$ are preserved under this operation. As a consequence, if $X^*$ or $Y^*$ has RNP and $X^*$ or $Y^*$ has approximation property (see \cite[Chapter VIII]{DU}) then $\mathrm{ext} (B_{X^* \hat{\otimes}_{\pi} Y^*})= \mathrm{ext} (B_{X^*}) \otimes \mathrm{ext} (B_{Y^*}).$
    The situation is better  for the stronger notions of denting points and strongly exposed points (see \cite{DU} for the definitions). Very recently, in \cite{LGCZ}, the notions of preserved extreme point and weak-strongly exposed point in projective tensor products were studied and under some additional assumptions such extreme points turn out to be of the type $x \otimes y$, where the component vectors have the same extremal property. This is also partly the theme of this paper.
   \vskip 1em

   Some of the classical results include the work of Ruess and Stegall \cite{RS}, who proved that $\mathrm{strexp} (B_{X \hat{\otimes}_{\pi} Y})= \mathrm{strexp} (B_X) \otimes \mathrm{strexp} (B_Y),$ where $\mathrm{strexp}(A)$ denotes the set of all strongly exposed points of $A$.  Also for denting points, D. Werner \cite{W} proved an analogous result in a more general context. He showed
   $\mathrm{dent} (\overline{\mathrm{co}}(C\otimes D))= \mathrm{dent} (C) \otimes \mathrm{dent} (D),$ where $C\subset X$ and $D\subset Y$ are absolutely convex, bounded and closed subsets and $\mathrm{dent}(A)$ denotes the set of all denting points of $A$. As a consequence of Werner's result, it is true that if $x_0\in S_X$ and $y_0\in S_Y$, then $x_0\otimes y_0$ is weak$^*$-denting point of $B_{(X\hat{\otimes}_{\pi} Y)^{**}}$ if and only if $x_0$ is weak$^*$-denting point of $B_{X^{**}}$ and $y_0$ is weak$^*$-denting point of $B_{Y^{**}}$.
\vskip 1em

 Motivated by the above results in the case of projective tensors, in this note we also study the notion of weak$^*$-strongly extreme point, which can be seen as an amalgam of two notions, namely, Namioka point  and extreme. In particular this study is a comparison of extremal structures with respect to two locally convex topologies. Abstract studies of this nature have been also carried out in \cite{F} and \cite{M}.

 \begin{Def}
 Let $X$ be a Banach space. We recall from \cite{BB} and  \cite{G}:
a point $x_0^*\in S_{X^*}$ is called
\begin{enumerate}
\item  weak$^*$-strongly extreme point of $B_{X^*}$ if the family of weak$^*$-slices containing $x_0^*$ forms a local base for the weak topology of $X^*$ at $x_0^*$.
\item  $ x_0^\ast \in B_{X^*}$ is a Namioka point, if the identity map from $(B_{X^{*}}$, weak$^*)$ to $(B_{X^{*}}$, weak$)$ is continuous at $x_0^*$.
\end{enumerate}
\end{Def}

\begin{thm}\label{bb}
\cite{BB}
A point $x_0^*\in S_{X^*}$ is weak$^*$-weak PC and extreme point of $B_{X^*}$ if and only if it is weak$^*$-strongly extreme point of $B_{X^*}$.
\end{thm}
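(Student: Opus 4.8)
The plan is to prove the two implications separately, unwinding the definition that $x_0^*$ is a weak$^*$-strongly extreme point precisely when the weak$^*$-slices $S(B_{X^*},x,\alpha)$ (with $x\in S_X$, $\alpha>0$) containing $x_0^*$ form a local base at $x_0^*$ for the relative weak topology on $B_{X^*}$.

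First I would treat the direction (weak$^*$-strongly extreme) $\Rightarrow$ (weak$^*$-weak PC and extreme), which is the more elementary one. For the point-of-continuity part, I would observe that every weak$^*$-slice $S(B_{X^*},x,\alpha)=\{y^*\in B_{X^*}: y^*(x)>1-\alpha\}$ is \emph{relatively weak$^*$-open} in $B_{X^*}$, since $y^*\mapsto y^*(x)$ is weak$^*$-continuous for $x\in X$. Hence, given any relatively weak-open neighborhood $W$ of $x_0^*$, the hypothesis produces a weak$^*$-slice $S$ with $x_0^*\in S\subseteq W$, and $S$ is a relative weak$^*$-neighborhood of $x_0^*$; this is exactly weak$^*$-weak continuity of the identity at $x_0^*$. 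For extremality I would argue by contradiction: if $x_0^*=\tfrac12(u^*+v^*)$ with $u^*\neq v^*$ in $B_{X^*}$, choose $\phi\in X^{**}$ with $\phi(u^*)>\phi(v^*)$ and set $c:=\phi(u^*)-\phi(x_0^*)=\phi(x_0^*)-\phi(v^*)>0$ and $W=\{y^*\in B_{X^*}: |\phi(y^*)-\phi(x_0^*)|<c\}$, a weak neighborhood of $x_0^*$ excluding both $u^*$ and $v^*$. The key elementary observation is that \emph{any} weak$^*$-slice $S(B_{X^*},x,\alpha)$ containing the midpoint $x_0^*$ must contain at least one endpoint: from $\tfrac12(u^*(x)+v^*(x))=x_0^*(x)>1-\alpha$ we get $\max\{u^*(x),v^*(x)\}>1-\alpha$, so $u^*\in S$ or $v^*\in S$. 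Applying the local-base hypothesis to $W$ yields a slice $S\subseteq W$ containing $x_0^*$, hence containing $u^*$ or $v^*$, contradicting $u^*,v^*\notin W$. Thus $x_0^*$ is extreme.

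For the converse, (weak$^*$-weak PC and extreme) $\Rightarrow$ (weak$^*$-strongly extreme), the plan is to combine the continuity hypothesis with \textbf{Choquet's lemma}. Since $B_{X^*}$ is weak$^*$-compact and convex and $x_0^*$ is an extreme point, Choquet's lemma guarantees that the weak$^*$-slices containing $x_0^*$ already form a local base for the relative \emph{weak$^*$}-topology at $x_0^*$ (and the functionals it produces, being weak$^*$-continuous, come from $X$, so these are genuine weak$^*$-slices). Now given any relatively weak-open $W\ni x_0^*$, weak$^*$-weak continuity of the identity at $x_0^*$ furnishes a relative weak$^*$-neighborhood $V$ of $x_0^*$ with $V\subseteq W$; Choquet's lemma then refines $V$ to a weak$^*$-slice $S$ with $x_0^*\in S\subseteq V\subseteq W$. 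Hence the weak$^*$-slices form a local base for the relative weak topology, i.e. $x_0^*$ is weak$^*$-strongly extreme.

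I expect the main obstacle to be the careful bookkeeping of the relevant topologies (the relative weak$^*$- and weak topologies on $B_{X^*}$ versus the ambient topologies on $X^*$) so that the slices, the weak neighborhoods, and the neighborhoods supplied by point-of-continuity all live in the same space and can be chained; in particular one must record that weak$^*$-slices are relatively weak$^*$-open and that Choquet's lemma produces slices determined by predual functionals. The only genuinely external input is Choquet's lemma, which supplies the passage from an arbitrary weak$^*$-neighborhood of the extreme point $x_0^*$ to a weak$^*$-slice; the extremality argument in the first direction, by contrast, is self-contained once the observation that a slice through a midpoint catches an endpoint is isolated.
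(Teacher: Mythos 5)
Your proposal is correct. Note that the paper itself contains no proof of this statement: it is quoted verbatim from \cite{BB}, so there is no in-paper argument to compare against. Your two directions are both sound: the forward implication correctly exploits that weak$^*$-slices are relatively weak$^*$-open (giving the PC part) and that a slice containing a midpoint must capture an endpoint (giving extremality), and the converse is the standard argument, with Choquet's lemma supplying the only nontrivial ingredient --- legitimately applicable here because $B_{X^*}$ is weak$^*$-compact and convex and $(X^*,\mathrm{weak}^*)^* = X$, so the slices Choquet produces are genuine weak$^*$-slices, exactly as you record. This is essentially the argument in \cite{BB}; it is also worth appreciating that the weak$^*$-compactness is what makes this case easy, in contrast to the denting-point analogue of \cite{LLT} cited in the introduction, where no such compactness is available and the corresponding refinement step requires real work.
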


In Section 2,
we show that if
  $x^\ast \otimes y^\ast$ is a Namioka point  of $B_{{\mathcal L}(X,Y^*)}$ for some $x^*\in S_{X^*}$ and $y^*\in S_{Y^*}$, then the component functionals have the same property. Theorem 2.4 demonstrates the converse implication under some additional hypothesis on the Banach spaces and component functionals.
We always consider a Banach space $X$ as canonically embedded in its bidual $X^{\ast\ast}$. Since $B_{X}$ is weak$^\ast$-dense in $B_{X^{**}}$, we see that a Namioka point $\tau$ of $B_{X^{**}}$ is the canonical image of $ x \in S_X$. In this case we say that $x$ is a Namioka point of $S_X$. Let $x$, $y$ be unit vectors and consider $x \otimes y$ as an element of ${\mathcal L}(X,Y^*)^*$ under the canonical embedding. We show in Section 3, that if $x \otimes y$ is a Namioka point  of the identity mapping on the unit sphere, then the same is true of the components $x$, $y$ when they are considered as vectors in the corresponding biduals. As an application, we show that if a point $z\in S_{{\mathcal L}(X,Y^*)^*}$ is a weak$^*$-strongly extreme point of the unit ball, then $z=x\otimes y$ for some weak$^*$-strongly extreme points $x\in S_X$ and $y\in S_Y$, provided the space of compact operators, $\mathcal{K}(X,Y^*)$, is separating for $X\hat{\otimes}_{\pi} Y$.
 In Section 4, a similar conclusion is obtained for points of weak-norm continuity, improving on the result of Werner \cite{W}. We also give examples of spaces where no compact operator is a point of weak$^*$-norm PC in $S_{{\mathcal L}(X,Y^\ast)}$.
 In Section 5, we show that the weak density of the set of Namika points  of $B_{X^*}$ in $S_{X^*}$ does not imply that this is the case for any closed subspace $Y \subset X$ , answering a question raised in \cite{Ra3}.  \vskip 1em

However, a main question in this domain remains open.
\begin{Q}
If $S_{{\mathcal L}(X,Y^\ast)}$ has a Namioka point, then does the ball always have a rank one operator as a Namioka point ?
\end{Q}

\section{Stability of Namioka points in $B_{{\mathcal L}(X,Y^*)}$}

Our first result shows that if an operator of rank one is a Namioka point  of $B_{{\mathcal L}(X,Y^*)}$, then the same holds for component vectors.

\begin{thm}\label{ma1}
Let $x_0^*\in S_{X^*}$ and $y_0^*\in S_{Y^*}$. Suppose $x_0^*\otimes y_0^* \in S_{{\mathcal L}(X,Y^*)}$ is a Namioka point  of $B_{{\mathcal L}(X,Y^*)}$. Then $x_0^*$ is a Namioka point of $B_{X^*}$ and $y_0^*$ is a Namioka point of $B_{Y^*}$.
\end{thm}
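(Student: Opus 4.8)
The plan is to transfer the Namioka (weak$^*$-weak point-of-continuity) property between $B_{\mathcal{L}(X,Y^*)}$ and the factor duals $B_{X^*},\,B_{Y^*}$ by means of two elementary bounded maps: a \emph{lifting} map $x^* \mapsto x^* \otimes y_0^*$, which carries a weak$^*$-convergent net from $X^*$ into $\mathcal{L}(X,Y^*)$, and a \emph{descending} evaluation (contraction) map $R_{y_0}\colon T \mapsto \big(x \mapsto (Tx)(y_0)\big)$ for a suitable $y_0 \in Y$, which is weak-weak continuous and pushes weak limits back down to $X^*$. I will prove that $x_0^*$ is a Namioka point of $B_{X^*}$; the conclusion for $y_0^*$ follows symmetrically (using instead the evaluation $T \mapsto T(x_0)$ for a fixed $x_0$ with $x_0^*(x_0)\neq 0$), or directly from the symmetry $X\hat{\otimes}_{\pi}Y = Y\hat{\otimes}_{\pi}X$ of the predual, which makes the weak$^*$- and weak-topologies on $\mathcal{L}(X,Y^*)=\mathcal{L}(Y,X^*)$ symmetric in the two factors.

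First I would fix a net $(x_\alpha^*)\subseteq B_{X^*}$ with $x_\alpha^* \to x_0^*$ in the weak$^*$-topology of $X^*$, and show that $x_\alpha^* \otimes y_0^* \to x_0^* \otimes y_0^*$ weak$^*$ in $\mathcal{L}(X,Y^*)=(X\hat{\otimes}_{\pi}Y)^*$. Since the operator norm of a rank-one map is $\|x_\alpha^* \otimes y_0^*\| = \|x_\alpha^*\|\,\|y_0^*\| \le 1$, the whole net lies in $B_{\mathcal{L}(X,Y^*)}$ and is bounded; hence it suffices to test weak$^*$-convergence on the dense linear span of elementary tensors $x\otimes y$, where $(x_\alpha^* \otimes y_0^*)(x\otimes y) = x_\alpha^*(x)\,y_0^*(y) \to x_0^*(x)\,y_0^*(y)$ is immediate. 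Because $x_0^* \otimes y_0^*$ is by hypothesis a Namioka point of $B_{\mathcal{L}(X,Y^*)}$, this weak$^*$-convergence upgrades to weak convergence, so $x_\alpha^* \otimes y_0^* \to x_0^* \otimes y_0^*$ in the weak topology of $\mathcal{L}(X,Y^*)$.

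To descend, I would pick $y_0 \in Y$ with $y_0^*(y_0)\neq 0$ (possible as $y_0^*\in S_{Y^*}$) and consider $R_{y_0}\colon \mathcal{L}(X,Y^*)\to X^*$, $R_{y_0}(T)(x) = (Tx)(y_0)$, which is bounded linear with $\|R_{y_0}\|\le \|y_0\|$ and satisfies $R_{y_0}(x^*\otimes y_0^*) = y_0^*(y_0)\,x^*$. Being bounded and linear, $R_{y_0}$ is weak-weak continuous, so it maps the weakly convergent net to $y_0^*(y_0)\,x_\alpha^* \to y_0^*(y_0)\,x_0^*$ weakly in $X^*$; dividing by the nonzero scalar $y_0^*(y_0)$ gives $x_\alpha^* \to x_0^*$ weakly. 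Since $(x_\alpha^*)$ was an arbitrary weak$^*$-convergent net in $B_{X^*}$ with limit $x_0^*$, this is precisely the assertion that $x_0^*$ is a Namioka point of $B_{X^*}$.

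There is no deep obstacle here; the two places requiring care are (i) the reduction of weak$^*$-convergence of the lifted net to a test on elementary tensors, which rests on the density of $X\otimes Y$ in $X\hat{\otimes}_{\pi}Y$ together with the boundedness of the net (on bounded sets the weak$^*$-topology is determined by any dense subspace), and (ii) the choice of the auxiliary vectors $y_0$ (and $x_0$ for the symmetric half) pairing nontrivially with $y_0^*$ (resp.\ $x_0^*$), which is exactly where the normalizations $\|x_0^*\|=\|y_0^*\|=1$ enter. The weak-weak continuity used in the descending step is automatic for bounded linear maps, so the argument is essentially a clean two-sided ``squeeze'' through the tensor structure.
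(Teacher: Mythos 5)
Your proposal is correct, and it takes a genuinely different route from the paper's. The paper argues directly with slices: given finite intersections of slices around $x_0^*$ and $y_0^*$, it manufactures a weakly open set $W$ around $x_0^*\otimes y_0^*$ out of the product functionals $x_i^{**}\otimes y_j^{**}$, with a careful choice of $\varepsilon$ and of the widths $\gamma_{ij}=1-x_i^{**}(x_0^*)y_j^{**}(y_0^*)+\varepsilon$ arranged so that membership of a lifted tensor in $W$ forces membership in the original slices; it then invokes the Namioka hypothesis to place a finite intersection of weak$^*$-slices determined by $\tau_k=\sum_l\lambda_{k,l}\,x_{k,l}\otimes y_{k,l}\in S_{X\hat{\otimes}_{\pi}Y}$ inside $W$, and pairs each $\tau_k$ with $y_0^*$ to obtain weak$^*$-slices of $B_{X^*}$ (determined by $\sum_l \lambda_{k,l}y_0^*(y_{k,l})x_{k,l}$) sitting inside the target weakly open set. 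Your net argument replaces both halves of this computation with two bounded linear maps: the lifting $x^*\mapsto x^*\otimes y_0^*$, whose weak$^*$-weak$^*$ continuity on bounded nets you correctly reduce to testing on the dense span of elementary tensors (it is in fact the adjoint of $u\mapsto(\mathrm{id}_X\otimes y_0^*)u$, which is exactly the pairing the paper performs by hand on the $\tau_k$), and the evaluation $R_{y_0}$, a left inverse up to the nonzero scalar $y_0^*(y_0)$, whose automatic weak-weak continuity does all the work that the paper's product functionals and $\varepsilon$-bookkeeping do. What each approach buys: yours is shorter, conceptually transparent, and needs no quantitative estimates; the paper's yields explicit slice containments (it exhibits the weak$^*$-slices of $B_{X^*}$ sitting inside prescribed intersections of slices), which meshes with the slice-based formulation of weak$^*$-strongly extreme points used in the subsequent corollary. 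Two minor remarks, neither affecting correctness: the existence of $y_0$ with $y_0^*(y_0)\neq 0$ needs only $y_0^*\neq 0$, not the normalization $\|y_0^*\|=1$; and your alternative appeal to the symmetry $X\hat{\otimes}_{\pi}Y\cong Y\hat{\otimes}_{\pi}X$ is legitimate, since the induced isometry $\mathcal{L}(X,Y^*)\cong\mathcal{L}(Y,X^*)$ is a weak$^*$- and weak-homeomorphism carrying $x_0^*\otimes y_0^*$ to $y_0^*\otimes x_0^*$.
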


\begin{proof}
Suppose $x^\ast \otimes y^\ast$ is a Namioka point. Consider two weakly open sets
$$\bigcap\limits_{i=1}^{n} S(B_{X^*}, x_i^{**}, \alpha_i)\  \mathrm{and} \ \bigcap\limits_{j=1}^{m} S(B_{Y^*}, y_j^{**}, \beta_j)$$ containing $x_0^*$ and $y_0^*$ respectively. Choose $0<\varepsilon <\min\{x_i^{**}(x_0^*)y_j^{**}(y_0^*): 1\leqslant i \leqslant n, 1\leqslant j \leqslant m \}$ such that for each $1\leqslant i \leqslant n$ and $1\leqslant j \leqslant m$  $$x_i^{**}(x_0^*) - \frac{\varepsilon}{y_j^{**}(y_0^*)}> 1-\alpha_i  \quad \mathrm{and} \quad y_j^{**}(y_0^*) - \frac{\varepsilon}{x_i^{**}(x_0^*)}> 1-\beta_j.$$ Also let $\gamma_{ij} := 1-x_i^{**}(x_0^*)y_j^{**}(y_0^*) + \varepsilon$. Consider weakly open set $$W:=\bigcap\limits_{i=1}^{n}\bigcap\limits_{j=1}^{m}S(B_{{\mathcal L}(X,Y^*)}, x_i^{**}\otimes y_j^{**}, \gamma_{ij}).$$ We observe that by hypothesis, $x_0^*\otimes y_0^* \in W.$ Thus there exists a weak$^*$-open set $\bigcap\limits_{k=1}^{p}S(B_{{\mathcal L}(X,Y^*)}, \tau_k, \delta_k)$ such that $x_0^*\otimes y_0^* \in \bigcap\limits_{k=1}^{p}S(B_{{\mathcal L}(X,Y^*)}, \tau_k, \delta_k) \subset W,$ where  $\tau_k\in S_{{\mathcal L}(X,Y^*)_*}=S_{(X\hat{\otimes}_{\pi} Y)}$ for all $k=1,\ldots,p$.
Without loss of generality, let $\tau_k= \sum\limits_{l=1}^{r(k)}\lambda_{k,l} (x_{k,l}\otimes y_{k,l})$ with $\lambda_{k,l}>0$ and  $\sum\limits_{l=1}^{r(k)}\lambda_{k,l}=1$. Then
\begin{equation}\notag
\begin{split}
(x_0^*\otimes y_0^*)(\sum\limits_{l=1}^{r(k)}\lambda_{k,l} (x_{k,l}\otimes y_{k,l}))> 1-\delta_k \ \forall 1\leqslant k\leqslant p\\
\Rightarrow \sum\limits_{l=1}^{r(k)}(\lambda_{k,l} y_0^*(y_{k,l})) x_0^*(x_{k,l})> 1-\delta_k \ \forall 1\leqslant k\leqslant p.
\end{split}
\end{equation}
Hence $\sum\limits_{l=1}^{r(k)} \eta_{k,l} x_0^*(x_{k,l})> 1-\delta_k$, where $\eta_{k,l} := \lambda_{k,l} y_0^*(y_{k,l})$ for all $k=1,\ldots, p$. Thus $$x_0^*\in \bigcap\limits_{k=1}^{p}S(B_{X^*}, \sum\limits_{l=1}^{r(k)} \eta_{k,l} x_{k,l}, \|\sum\limits_{l=1}^{r(k)} \eta_{k,l} x_{k,l}\|-1+\delta_k).$$
Let $x^*\in \bigcap\limits_{k=1}^{p}S(B_{X^*}, \sum\limits_{l=1}^{r(k)} \eta_{k,l} x_{k,l}, \|\sum\limits_{l=1}^{r(k)} \eta_{k,l} x_{k,l}\|-1+\delta_k)
.$ Then $$x^*\otimes y_0^* \in  \bigcap\limits_{k=1}^{p}S(B_{{\mathcal L}(X,Y^*)}, \tau_k, \delta_k) \subset W.$$
Thus $x_i^{**}(x^*)> \frac{1-\gamma_{i,j}}{y_j^{**}(y_0^*)}> 1-\alpha_i$ for all $i=1,\ldots,n$.
Hence $$x_0^*\in \bigcap\limits_{k=1}^{p}S(B_{X^*}, \sum\limits_{l=1}^{r(k)} \eta_{k,l} x_{k,l}, \|\sum\limits_{l=1}^{r(k)} \eta_{k,l} x_{k,l}\|-1+\delta_k) \subset \bigcap\limits_{i=1}^{n} S(B_{X^*}, x_i^{**}, \alpha_i).$$
Similarly, $$y_0^*\in \bigcap\limits_{k=1}^{p}S(B_{Y^*}, \sum\limits_{l=1}^{r(k)} \zeta_{k,l} x_{k,l}, \|\sum\limits_{l=1}^{r(k)} \zeta_{k,l} x_{k,l}\|-1+\delta_k) \subset \bigcap\limits_{j=1}^{m} S(B_{Y^*}, y_j^{**}, \beta_j),$$ where $\zeta_{k,l} := \lambda_{k,l} x_0^*(x_{k,l})$ for all $k=1,\ldots, p$. Consequently, $x_0^*$ and $y_0^*$ are Namioka points of $B_{X^*}$ and $B_{Y^*}$ respectively.
\end{proof}

\begin{cor}
Let $x_0^*\in S_{X^*}$ and $y_0^*\in S_{Y^*}$. Also let $x_0^*\otimes y_0^* \in S_{{\mathcal L}(X,Y^*)}$ be weak$^*$-strongly extreme point of $B_{{\mathcal L}(X,Y^*)}$. Then $x_0^*$ is weak$^*$-strongly extreme point of $B_{X^*}$ and $y_0^*$ is weak$^*$-strongly extreme point of $B_{Y^*}$.
\end{cor}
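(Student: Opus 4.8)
The plan is to reduce the statement to the two results already in hand, Theorem \ref{bb} and Theorem \ref{ma1}, supplemented by an elementary convexity argument. By Theorem \ref{bb}, the assumption that $x_0^*\otimes y_0^*$ is a weak$^*$-strongly extreme point of $B_{{\mathcal L}(X,Y^*)}$ is equivalent to saying that $x_0^*\otimes y_0^*$ is simultaneously a Namioka point (weak$^*$-weak point of continuity) and an extreme point of $B_{{\mathcal L}(X,Y^*)}$. So the first step is to split the hypothesis into these two pieces.

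For the continuity piece I would invoke Theorem \ref{ma1} directly: since $x_0^*\otimes y_0^*$ is a Namioka point of $B_{{\mathcal L}(X,Y^*)}$, that theorem immediately gives that $x_0^*$ is a Namioka point of $B_{X^*}$ and $y_0^*$ is a Namioka point of $B_{Y^*}$.

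For the extremality piece the argument is the routine tensor decomposition. Suppose $x_0^*=\frac{1}{2}(u^*+v^*)$ with $u^*,v^*\in B_{X^*}$. Viewing rank one tensors as operators in ${\mathcal L}(X,Y^*)$, bilinearity gives $x_0^*\otimes y_0^*=\frac{1}{2}(u^*\otimes y_0^*+v^*\otimes y_0^*)$, and since $\|u^*\otimes y_0^*\|=\|u^*\|\,\|y_0^*\|\leqslant 1$ (and likewise for $v^*$) both summands lie in $B_{{\mathcal L}(X,Y^*)}$. Extremality of $x_0^*\otimes y_0^*$ forces $u^*\otimes y_0^*=v^*\otimes y_0^*$, and because $y_0^*\neq 0$ this yields $u^*=v^*$; hence $x_0^*$ is an extreme point of $B_{X^*}$, and by the symmetric argument $y_0^*$ is an extreme point of $B_{Y^*}$.

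Finally I would close the loop by applying Theorem \ref{bb} in the converse direction to each factor: $x_0^*$ is both a Namioka point and an extreme point of $B_{X^*}$, so it is a weak$^*$-strongly extreme point of $B_{X^*}$, and similarly for $y_0^*$. I do not anticipate a genuine obstacle, as the corollary is essentially a repackaging of Theorem \ref{ma1} through the characterization of Theorem \ref{bb}; the only point needing (minor) care is verifying that the extreme point decomposition passes cleanly from the tensor to each factor, which rests on nothing more than $y_0^*\neq 0$ (respectively $x_0^*\neq 0$).
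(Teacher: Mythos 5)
Your proposal is correct and follows essentially the same route as the paper: the paper's proof likewise observes that extremality of $x_0^*\otimes y_0^*$ passes to the factors and then cites Theorems \ref{bb} and \ref{ma1}. The only difference is that you spell out the routine rank-one decomposition argument for the extremality step, which the paper leaves as an observation.
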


\begin{proof}
Observe that $x_0^*\otimes y_0^*$ is extreme point of $B_{{\mathcal L}(X,Y^*)}$ gives $x_0^*$ and $y_0^*$ are extreme points of $B_{X^*}$ and $B_{Y^*}$ respectively. Rest follows from Theorems $\ref{bb}$ and $\ref{ma1}$.
\end{proof}

We recall that investigation of Namioka points  is well connected with the uniqueness of the Hahn-Banach extensions, see \cite{G}. 

\begin{lem}\label{l1}
 \cite{G}
    Let $X$ be a Banach space. Then, $x^*\in S_{X^*}$ is a Namioka point if and only if $x^*$ has a unique norm preserving extension to $X^{**}$.
 \end{lem}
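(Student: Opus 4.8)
The plan is to relate the two conditions through the canonical isometric embedding $J\colon X^* \to X^{***}$ together with Goldstine's theorem applied to the space $X^*$. Write $\hat{x^*}=J(x^*)\in X^{***}$, so that $\hat{x^*}(x^{**})=x^{**}(x^*)$ for $x^{**}\in X^{**}$. Viewing $X\subseteq X^{**}$ canonically, $\hat{x^*}$ is always a norm-preserving extension of $x^*$, and the assertion that $x^*$ has a \emph{unique} norm-preserving extension is precisely the statement that the set of Hahn--Banach extensions
$$E(x^*)=\{\phi\in B_{X^{***}} : \phi|_X = x^*\}$$
equals $\{\hat{x^*}\}$. Two elementary observations drive the argument. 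First, for any net $(x^*_\alpha)\subseteq B_{X^*}$, weak$^*$-convergence $x^*_\alpha\to x^*$ (in $\sigma(X^*,X)$) forces every $\sigma(X^{***},X^{**})$-cluster point $\phi$ of $(\hat{x^*_\alpha})$ to satisfy $\phi|_X=x^*$, since $\hat{x^*_\alpha}(x)=x^*_\alpha(x)\to x^*(x)$ for each $x\in X$. Second, weak-convergence $x^*_\alpha\to x^*$ (in $\sigma(X^*,X^{**})$) is \emph{literally the same condition} as $\hat{x^*_\alpha}\to\hat{x^*}$ in $\sigma(X^{***},X^{**})$, because $x^{**}(x^*_\alpha)=\hat{x^*_\alpha}(x^{**})$ for every $x^{**}\in X^{**}$.

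For the implication that a Namioka point has a unique extension, I would take an arbitrary $\phi\in E(x^*)$ and realize it as a limit of evaluation functionals. By Goldstine's theorem, $B_{X^*}$ is $\sigma(X^{***},X^{**})$-dense in $B_{X^{***}}$, so there is a net $(x^*_\alpha)\subseteq B_{X^*}$ with $\hat{x^*_\alpha}\to\phi$ in $\sigma(X^{***},X^{**})$. Restricting to $x\in X$ gives $x^*_\alpha(x)\to\phi(x)=x^*(x)$, so $x^*_\alpha\to x^*$ weak$^*$. Since $x^*$ is a Namioka point, $x^*_\alpha\to x^*$ weakly, whence $\hat{x^*_\alpha}\to\hat{x^*}$ in $\sigma(X^{***},X^{**})$ by the second observation. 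Uniqueness of limits in the Hausdorff space $(X^{***},\sigma(X^{***},X^{**}))$ forces $\phi=\hat{x^*}$, so $E(x^*)=\{\hat{x^*}\}$.

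For the converse, I would start from an arbitrary net $(x^*_\alpha)\subseteq B_{X^*}$ with $x^*_\alpha\to x^*$ weak$^*$ and show $x^*_\alpha\to x^*$ weakly. By weak$^*$-compactness of $B_{X^{***}}$ the net $(\hat{x^*_\alpha})$ has $\sigma(X^{***},X^{**})$-cluster points; let $\phi$ be any such cluster point. Weak$^*$ lower semicontinuity of the dual norm gives $\|\phi\|\le 1$, the first observation gives $\phi|_X=x^*$, and since $\|x^*\|=1$ this in fact forces $\|\phi\|=1$; hence $\phi\in E(x^*)$. By the uniqueness hypothesis $\phi=\hat{x^*}$. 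Thus $\hat{x^*}$ is the \emph{only} cluster point of $(\hat{x^*_\alpha})$ in the compact space $B_{X^{***}}$, so $\hat{x^*_\alpha}\to\hat{x^*}$ in $\sigma(X^{***},X^{**})$, which by the second observation says exactly $x^*_\alpha\to x^*$ weakly. As the net was arbitrary, the identity map is weak$^*$-to-weak continuous at $x^*$.

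The routine bookkeeping — that the canonical images land in $B_{X^{***}}$, that restriction to $X$ commutes with the weak$^*$ limits, and that norm preservation is automatic once $\phi|_X=x^*$ — is straightforward. The one genuinely non-formal point, and where I expect the main subtlety, is the converse direction's passage from \textbf{``every weak$^*$-cluster point of $(\hat{x^*_\alpha})$ equals $\hat{x^*}$''} to actual convergence $\hat{x^*_\alpha}\to\hat{x^*}$: this is exactly where compactness of $B_{X^{***}}$ is indispensable, since in general a unique cluster point implies convergence only in a compact (or countably compact) setting.
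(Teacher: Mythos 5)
Your proof is correct and complete. The paper itself gives no argument for this lemma --- it is quoted from Godefroy \cite{G} --- so there is no internal proof to compare against; your derivation (Goldstine's theorem applied to $X^*$ to realize an arbitrary norm-preserving extension as a $\sigma(X^{***},X^{**})$-limit of a net from $B_{X^*}$, plus Alaoglu compactness of $B_{X^{***}}$ and the unique-cluster-point-implies-convergence principle for the converse) is exactly the standard self-contained route to this equivalence, and you correctly flag the one non-formal step, namely that passing from ``every cluster point of $(\hat{x^*_\alpha})$ equals $\hat{x^*}$'' to actual convergence genuinely requires compactness.
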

In the following theorem in order to show that an elementary tensor is a Namioka point, we consider  different additional conditions on the second component. We recall the abstract notion of a unitary from \cite{BJR}, $y_0 \in S_{Y}$ is said to be a unitary if $Y^\ast = \mathrm{span}\{y^\ast \in B_{Y^*}: y^*(y_0)=1\}$. It follows from the results in \cite{BJR} that any unitary is a strongly extreme point and  remain unitary in the bidual. As noted in Remark 3.10 in \cite{BJR}, any Banach space can be equivalently renormed  to have a unitary.
In the finite-dimensional setting, a complete classification is given for when an elementary tensor is a unitary (or equivalently, a vertex) \cite[Theorem 2.2]{GW}.
We will also use class of Banach spaces $Y$ for which ${\mathcal K}(X,Y)= {\mathcal L}(X,Y)$. See Theorem VIII.4 in \cite{DU} and its proof. We note that in the following theorem, the assumptions on the space $Y$ are invariant under equivalent renorming.
\begin{thm}\label{c1}
Let $Y$ be a reflexive Banach space with the approximation property (AP). Suppose $y_0^*\in S_{Y^*}$ is a unitary. Then for any Banach space $X$ such that ${\mathcal L}(X,Y^*)={\mathcal K}(X,Y^*)$ and for a Namioka point, $x_0^* \in S_{X^*}$, we have $x_0^*\otimes y_0^* \in S_{{\mathcal L}(X,Y^*)}$ is a Namioka point.

\end{thm}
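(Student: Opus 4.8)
The plan is to reduce everything to the Hahn--Banach characterization of Namioka points in Lemma \ref{l1}. Since the weak$^*$-topology on $\mathcal{L}(X,Y^*)$ is the one induced by the predual $X\hat{\otimes}_{\pi}Y$, the element $x_0^*\otimes y_0^*\in S_{(X\hat{\otimes}_{\pi}Y)^*}$ is a Namioka point of $B_{\mathcal{L}(X,Y^*)}$ exactly when it admits a \emph{unique} norm-preserving extension from $X\hat{\otimes}_{\pi}Y$ to $(X\hat{\otimes}_{\pi}Y)^{**}$. Thus the entire theorem becomes an extension-uniqueness statement, and the two hypotheses on the components will enter separately: the Namioka property of $x_0^*$ to force uniqueness ``in the $X$-variable'', and the unitarity of $y_0^*$ to propagate this ``in the $Y$-variable''.

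The first step is to make the bidual concrete. Using $\mathcal{L}(X,Y^*)=\mathcal{K}(X,Y^*)$ together with the approximation property and reflexivity of $Y$ (which pass to $Y^*$), I would identify $\mathcal{K}(X,Y^*)$ with the injective tensor product $X^*\hat{\otimes}_{\varepsilon}Y^*$, pass to its dual (the integral forms on $X^*\times Y^*$), and then use that reflexivity of $Y$ forces integral $=$ nuclear while AP forces nuclear $=$ projective, to obtain the isometric identification $(X\hat{\otimes}_{\pi}Y)^{**}\cong X^{**}\hat{\otimes}_{\pi}Y$, and hence $(X\hat{\otimes}_{\pi}Y)^{***}\cong \mathcal{L}(X^{**},Y^*)$. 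Under this dictionary the canonical inclusion $X\hat{\otimes}_{\pi}Y\hookrightarrow X^{**}\hat{\otimes}_{\pi}Y$ is $x\otimes y\mapsto \hat{x}\otimes y$, and a norm-preserving extension of $x_0^*\otimes y_0^*$ is precisely a norm-one operator $T\in\mathcal{L}(X^{**},Y^*)$ with $T\hat{x}=x_0^*(x)\,y_0^*$ for all $x\in X$ (i.e. $T|_X=x_0^*\otimes y_0^*$). Existence is immediate, since $\widehat{x_0^*}\otimes y_0^*$, where $\widehat{x_0^*}\in S_{X^{***}}$ is the canonical image of $x_0^*$, is such an operator; the content is uniqueness.

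For uniqueness, fix any norm-one $T$ with $T\hat{x}=x_0^*(x)y_0^*$. For each $y\in B_Y$ with $y_0^*(y)=1$, define $T_y\in X^{***}$ by $T_y(\xi)=\langle T\xi,y\rangle$. A direct computation gives $T_y|_X=x_0^*$; since $T_y$ extends $x_0^*$ we have $\|T_y\|\geqslant\|x_0^*\|=1$, while $\|T_y\|\leqslant\|T\|\,\|y\|\leqslant 1$, so $\|T_y\|=1$. Hence $T_y$ is a norm-preserving extension of the Namioka point $x_0^*$, and by Lemma \ref{l1} it is the unique one, namely $T_y=\widehat{x_0^*}$. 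Therefore $\langle T\xi,y\rangle=\xi(x_0^*)=\langle(\widehat{x_0^*}\otimes y_0^*)\xi,\,y\rangle$ for every $\xi\in X^{**}$ and every $y\in B_Y$ with $y_0^*(y)=1$. Because $y_0^*$ is a unitary and $Y$ is reflexive, the set $\{y\in B_Y: y_0^*(y)=1\}$ linearly spans $Y$, so $T\xi$ and $(\widehat{x_0^*}\otimes y_0^*)\xi$ agree on a spanning subset of $Y$ and thus coincide in $Y^*$. This yields $T=\widehat{x_0^*}\otimes y_0^*$, proving uniqueness, and the theorem follows.

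I expect the main obstacle to be the identification in the second step: establishing the isometry $(X\hat{\otimes}_{\pi}Y)^{**}\cong X^{**}\hat{\otimes}_{\pi}Y$ and, more delicately, the exact form of the canonical embeddings, so that ``norm-preserving extension of $x_0^*\otimes y_0^*$'' translates faithfully into ``norm-one $T\in\mathcal{L}(X^{**},Y^*)$ with $T|_X=x_0^*\otimes y_0^*$''. Once this is pinned down, the slicing argument of the last paragraph is routine. A secondary point worth checking is that $\{y\in B_Y:y_0^*(y)=1\}$ genuinely spans $Y$, and that the hypotheses on $Y$ are invoked only through the renorming-invariant facts recorded before the statement.
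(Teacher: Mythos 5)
Your proposal is correct and follows essentially the same route as the paper: both reduce to uniqueness of norm-preserving extensions via Lemma \ref{l1}, use the identification $\mathcal{L}(X,Y^*)^*\cong X^{**}\hat{\otimes}_{\pi}Y$ (which the paper asserts from reflexivity, AP and $\mathcal{L}=\mathcal{K}$, and you justify in slightly more detail), and restrict any extension against the face $\{y\in B_Y: y_0^*(y)=1\}$ to obtain norm-preserving extensions of the Namioka point $x_0^*$, with unitarity supplying the spanning set that forces the extension to be the canonical one. The only differences are cosmetic: you phrase extensions as operators in $\mathcal{L}(X^{**},Y^*)$ and compare against $\widehat{x_0^*}\otimes y_0^*$, whereas the paper compares two arbitrary extensions $T_1,T_2$ as functionals.
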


\begin{proof}
Let  $S_{y_0^*}=\{y\in S_{Y} : y_0^*(y)=1\}$. Since $y_0^*$ is unitary, $\mathrm{span}{(S_{y^*_0})}=Y$ .
 Since $Y$ is reflexive and has the AP, by our assumptions on $X$,
$ \mathcal{L}(X,Y^*)^{*}= X^{**}\hat{\otimes}_{\pi} Y$.
We will use the equivalent formulation of a Namioka point in terms of unique extensions.  Let $T_1$ and $T_2$ be two  Hahn-Banach extensions of $x_0^*\otimes y_0^*$, i.e., for  $i=1,2$, we have
\begin{center}
$T_i: {\mathcal L}(X,Y^*)^{*} \rightarrow \mathbb{R}$ such that $T_i\Big|_{X\hat{\otimes}_{\pi} Y}= x_0^*\otimes y_0^*$ and $\|T_i\|=1$
\end{center}
For each $i=1,2$ and $y \in S_{y^*_0}$, we define
\begin{center}
$\eta_{i,y}: X^{**}\rightarrow \mathbb{R}$ \ by \ $\eta_{i,}(x^{**})=T_i(x^{**}\otimes y)$
\end{center}
Observe that
\begin{equation}\notag
\eta_{i,y}(x)=T_i(x\otimes y)= (x_0^*\otimes y_0^*)(x\otimes y)=x_0^*(x) y_0^*(y)= x_0^*(x) \quad \forall x\in X
\end{equation}
Thus $\eta_{i,y}\Big|_{X}=x_0^*$. Also $\|\eta_{i,y}\|=1$. Indeed, $\|\eta_{i,y}\| \leqslant \|T_i\|=1$. Choose $x_n \in S_{X}$ with $\lim\limits_{n\rightarrow \infty} x_0^*(x_n)=1$. Then $\|\eta_{i,y}\| \geqslant \lim\limits_{n\rightarrow \infty} \eta_{i,y}(x_n)=\lim\limits_{n\rightarrow \infty} x_0^*(x_n)=1$. Hence $\eta_{i,y}$ ($i=1,2$ and $k=1,\ldots,n$) are Hahn-Banach extensions of $x_0^*$. Since $x_0^*$ is a Namioka point,
\begin{equation}\notag
\begin{split}
\eta_{1,y}=\eta_{2,y}  \hspace{5 cm}\\
\Rightarrow \eta_{1,y} (x^{**})=\eta_{2,y}(x^{**})  \ \forall x^{**}\in X^{**} \hspace{2.7 cm}\\
\Rightarrow T_1(x^{**} \otimes y)= T_2(x^{**} \otimes y) \quad \forall y \in S_{y^*_0},\ \forall x^{**}\in X^{**}\\
\Rightarrow T_1=T_2 \hspace{6.5 cm}
\end{split}
\end{equation}
 Hence, $x_0^*\otimes y_0^*$ has unique Hahn-Banach extension to ${\mathcal L}(X,Y^*)^{*}$. Thus, $x_0^* \otimes y_0^*$ is a Namioka point of $B_{{\mathcal L}(X,Y^*)}$.
\end{proof}

Taking into account the fact that $(X/Y)^*=Y^{\perp}$, following proposition ensures the existence of unitaries in finite dimensions.
\begin{prop}
Let $X$ be a finite dimensional Banach space. Then for any $x\in S_X$ either $x$ is unitary in $X$ or $\pi(x)$ is a unitary in $X/Y$, where $\pi :X \rightarrow X/Y$ is the quotient map and $Y=\{x^*\in B_{X^*} : x^*(x)=1\}^{\perp}$.
\end{prop}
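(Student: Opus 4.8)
The plan is to prove the dichotomy by showing that the second alternative in fact holds unconditionally: $\pi(x)$ is a unitary in $X/Y$ for every $x \in S_X$. Throughout I would write $F = \{x^* \in B_{X^*} : x^*(x) = 1\}$ for the set of norming functionals of $x$, which is nonempty by Hahn--Banach, so that by definition $Y = F^\perp$ is the pre-annihilator of $F$ in $X$.

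First I would verify that $\pi(x) \in S_{X/Y}$, so that the notion of unitary even applies. For any $y \in Y$ and any $f \in F$ one has $f(x-y) = f(x) - 0 = 1$, whence $\|x - y\| \geq 1$; taking the infimum over $y \in Y$ gives $\|\pi(x)\| \geq 1$, while the reverse inequality is immediate from $\|x\| = 1$. Hence $\|\pi(x)\| = 1$.

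Next I would record the two finite-dimensional identifications that drive the argument. Since $X$ is finite dimensional, the double-annihilator relation gives $Y^\perp = (F^\perp)^\perp = \mathrm{span}(F)$. Moreover the canonical map $\phi \mapsto \phi \circ \pi$ is an isometric isomorphism of $(X/Y)^*$ onto $Y^\perp$, under which $\phi(\pi(x)) = (\phi \circ \pi)(x)$. The key step is then to match the norming functionals: a functional $\phi \in B_{(X/Y)^*}$ satisfies $\phi(\pi(x)) = 1$ if and only if its image $g = \phi \circ \pi$ lies in $Y^\perp$ with $\|g\| \leq 1$ and $g(x) = 1$, i.e. if and only if $g \in F$. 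Here one uses that $F \subseteq Y^\perp$ automatically, since every $f \in F$ annihilates $Y = F^\perp$. Thus the isometry carries the set of norming functionals of $\pi(x)$ exactly onto $F$.

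Combining these observations, the span of the norming functionals of $\pi(x)$ is carried by the isometry onto $\mathrm{span}(F) = Y^\perp = (X/Y)^*$, so $\pi(x)$ is a unitary in $X/Y$, giving the second alternative. In the degenerate case $Y = \{0\}$ one has $\mathrm{span}(F) = X^*$ and $X/Y = X$, so $\pi(x) = x$ is already a unitary in $X$ itself, recovering the first alternative. I expect the only delicate point to be the bookkeeping with the isometric identification $(X/Y)^* \cong Y^\perp$: one must check that the norm constraint $\|\phi\| \leq 1$ transfers to $\|g\| \leq 1$ and back, so that the correspondence between the norming functionals of $\pi(x)$ and the set $F$ is an exact bijection rather than merely a one-sided inclusion.
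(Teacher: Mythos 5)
Your proof is correct, and it takes essentially the approach the paper intends: the paper states this proposition without a written proof, offering only the preceding remark that $(X/Y)^{*}=Y^{\perp}$, which is exactly the identification driving your argument, combined with the finite-dimensional double-annihilator identity $Y^{\perp}=\mathrm{span}\{x^{*}\in B_{X^{*}}:x^{*}(x)=1\}$. Your additional observation that the second alternative holds unconditionally (the first alternative being the degenerate case $Y=\{0\}$, where $X/Y=X$ and $\pi(x)=x$) is a correct slight strengthening of the stated dichotomy, and your verification that $\|\pi(x)\|=1$ and that the isometry $\phi\mapsto\phi\circ\pi$ carries the norming functionals of $\pi(x)$ bijectively onto those of $x$ supplies precisely the details the paper leaves implicit.
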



\section{Stability in higher duals of projective tensor products}

The following proof uses techniques similar  to the ones used during the proof of Theorem $\ref{ma1}$. For the sake of completeness we give the details of the proof here.

\begin{thm} \label{stab w*-w pc}
Let $x_0\in S_X$, $y_0\in S_Y$ and $x_0 \otimes y_0$ be a Namioka point of $B_{{\mathcal L}(X,Y^*)^{*}}$. Then $x_0$ is a Namioka point of $B_{X^{**}}$ and $y_0$ is a Namioka point of $B_{Y^{**}}$.
\end{thm}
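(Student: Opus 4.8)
The plan is to mimic the proof of Theorem \ref{ma1}, but now working ``upstairs'' in the bidual. The key structural fact is that $\mathcal L(X,Y^*)^* = (X\hat\otimes_\pi Y)^{**}$, and under the canonical embedding $X\hat\otimes_\pi Y \hookrightarrow (X\hat\otimes_\pi Y)^{**}$ the elementary tensors $x\otimes y$ (with $x\in X$, $y\in Y$) sit inside as weak$^*$-dense points of the bidual ball. So the weak$^*$-topology on $B_{\mathcal L(X,Y^*)^*}$ is the one induced by $\mathcal L(X,Y^*) = (X\hat\otimes_\pi Y)^*$ as predual. Concretely, I would take weakly open neighborhoods of $x_0$ in $X^{**}$ and of $y_0$ in $Y^{**}$ cut out by slices $\bigcap_{i=1}^n S(B_{X^{**}}, x_i^{***}, \alpha_i)$ and $\bigcap_{j=1}^m S(B_{Y^{**}}, y_j^{***}, \beta_j)$, where now the determining functionals live in the third duals $X^{***}$, $Y^{***}$. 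The tensors $x_i^{***}\otimes y_j^{***}$ act as functionals on $\mathcal L(X,Y^*)^* = (X\hat\otimes_\pi Y)^{**}$, i.e. they lie in $(X\hat\otimes_\pi Y)^{***}$, so I can form the weakly open set $W := \bigcap_{i,j} S(B_{\mathcal L(X,Y^*)^*}, x_i^{***}\otimes y_j^{***}, \gamma_{ij})$ with the same choice of $\gamma_{ij} = 1 - x_i^{***}(x_0)y_j^{***}(y_0) + \varepsilon$ and the analogous threshold conditions on $\varepsilon$.

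Next, since $x_0\otimes y_0\in W$ and $x_0\otimes y_0$ is a Namioka point of $B_{\mathcal L(X,Y^*)^*}$ by hypothesis, the identity map is weak$^*$-to-weak continuous there, so there is a basic weak$^*$-neighborhood $\bigcap_{k=1}^p S(B_{\mathcal L(X,Y^*)^*}, \tau_k, \delta_k) \subset W$ with each $\tau_k$ in the predual $\mathcal L(X,Y^*) = (X\hat\otimes_\pi Y)^*$, normalized, containing $x_0\otimes y_0$. I would then want to realize these $\tau_k$ concretely enough to run the same disentangling computation as in Theorem \ref{ma1}. The cleanest route is to evaluate: since $x_0\otimes y_0$ lies in the weak$^*$-slice, $(x_0\otimes y_0)(\tau_k) = \tau_k(x_0\otimes y_0) > 1-\delta_k$. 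Viewing $\tau_k\in \mathcal L(X,Y^*)$ as an operator, $\tau_k(x_0\otimes y_0) = (\tau_k x_0)(y_0)$, and I want to define from $\tau_k$ a functional on $B_{X^{**}}$ (by ``freezing'' the $y_0$ slot) whose slice sits inside $\bigcap_i S(B_{X^{**}}, x_i^{***}, \alpha_i)$, and symmetrically for $Y$. That is, set $\sigma_k := $ the element of $X^*$ given by $x\mapsto (\tau_k x)(y_0)$; then $x_0\in \bigcap_k S(B_{X^{**}}, \sigma_k, \|\sigma_k\|-1+\delta_k)$, and for any $x\in X^{**}$ in this intersection, $x\otimes y_0$ lands in the weak$^*$-slice hence in $W$, forcing $x_i^{***}(x) > 1-\alpha_i$. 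This exhibits a weak$^*$-neighborhood of $x_0$ inside the prescribed weak neighborhood, giving the Namioka property for $x_0$; the argument for $y_0$ is symmetric, freezing the $x_0$ slot instead.

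The main obstacle I anticipate is the bookkeeping at the level of duals: in Theorem \ref{ma1} the predual $X\hat\otimes_\pi Y$ had a transparent description (finite convex combinations of elementary tensors $x_{k,l}\otimes y_{k,l}$), whereas here the predual of $\mathcal L(X,Y^*)^*$ is $\mathcal L(X,Y^*)$ itself, whose elements are operators rather than tensors, so the ``freezing a slot'' operation replaces the explicit $\sum_l \lambda_{k,l}(x_{k,l}\otimes y_{k,l})$ decomposition. I must be careful that the functional $\sigma_k : x\mapsto(\tau_k x)(y_0)$ genuinely lies in $X^*$ (it does, being the composition of the bounded operator $\tau_k$ with evaluation at $y_0$) and that the threshold arithmetic relating $\gamma_{ij}$, $\alpha_i$, $\beta_j$ goes through verbatim; this is exactly where the inequalities $x_i^{***}(x_0) - \varepsilon/y_j^{***}(y_0) > 1-\alpha_i$ are used. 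Since these are formal manipulations identical in spirit to the rank-one computation already carried out, and since the paper has flagged that the techniques are ``similar,'' I would present the slice-chasing in the same layout as Theorem \ref{ma1}, with the one genuinely new ingredient being the identification of the predual and the slot-freezing maps into $X^*$ and $Y^*$.
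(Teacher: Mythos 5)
Your proposal is essentially the paper's own proof: the same bidual slices with determining functionals $x_i^{***},y_j^{***}$ in the third duals, the same thresholds $\varepsilon$ and $\gamma_{ij}$, the same passage via the Namioka hypothesis to weak$^*$-slices determined by operators $T_k\in S_{\mathcal{L}(X,Y^*)}$, and the same slot-freezing functionals $y_0\circ T_k\in X^*$ (resp.\ $T_k(x_0)\in Y^*$) followed by the inclusion argument. The one place where you are less careful than the paper is the definition of $W$: you assert that $x_i^{***}\otimes y_j^{***}$ ``acts as a functional'' on $\mathcal{L}(X,Y^*)^*=(X\hat{\otimes}_{\pi}Y)^{**}$, i.e.\ lies in $(X\hat{\otimes}_{\pi}Y)^{***}$. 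That is not literally true: such a tensor is canonically defined only on the subspace $X^{**}\otimes Y^{**}\subset\mathcal{L}(X,Y^*)^*$ (where $x^{**}\otimes y^{**}$ acts on an operator $T$ by $x^{**}(T^*y^{**})$), and $(X\hat{\otimes}_{\pi}Y)^{**}$ is in general strictly larger than, and not identifiable with, $X^{**}\hat{\otimes}_{\pi}Y^{**}$, so there is no canonical action of $x_i^{***}\otimes y_j^{***}$ on the whole space. The paper resolves exactly this point by letting $\psi_{i,j}$ be a norm-preserving Hahn--Banach extension of $x_i^{***}\otimes y_j^{***}$ from $X^{**}\otimes Y^{**}$ to $\mathcal{L}(X,Y^*)^*$ and forming $W$ from the $\psi_{i,j}$. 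The patch is harmless for your argument, since the only elements of $W$ you ever evaluate, namely $x_0\otimes y_0$ and $x^{**}\otimes y_0$, are elementary bidual tensors, on which any such extension agrees with $x_i^{***}\otimes y_j^{***}$; with that single correction your slice-chasing goes through exactly as in the paper.
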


\begin{proof}

Suppose $x_0 \otimes y_0$ is a Namioka point. Consider two weakly open sets
$$\bigcap\limits_{i=1}^{n} S(B_{X^{**}}, x_i^{***}, \alpha_i)\  \mathrm{and} \ \bigcap\limits_{j=1}^{m} S(B_{Y^{**}}, y_j^{***}, \beta_j)$$ containing $x_0$ and $y_0$ respectively. Choose $0<\varepsilon <\min\{x_i^{***}(x_0) y_j^{***}(y_0): 1\leqslant i \leqslant n, 1\leqslant j \leqslant m \}$ such that for each $1\leqslant i \leqslant n$ and $1\leqslant j \leqslant m$  $$x_i^{***}(x_0) - \frac{\varepsilon}{y_j^{***}(y_0)}> 1-\alpha_i  \quad \mathrm{and} \quad y_j^{***}(y_0) - \frac{\varepsilon}{x_i^{***}(x_0)}> 1-\beta_j.$$ Also let $\gamma_{ij} := 1-x_i^{***}(x_0)y_j^{***}(y_0) + \varepsilon$. Consider the weakly open set $$W:=\bigcap\limits_{i=1}^{n}\bigcap\limits_{j=1}^{m}S(B_{{\mathcal L}(X,Y^*)^{*}}, \psi_{i,j}, \gamma_{ij}),$$
where $\psi_{i,j}$ is a norm preserving extension of $x_i^{***}\otimes y_j^{***}$ from $X^{**}\otimes Y^{**}$ to ${\mathcal L}(X,Y^*)^{*}$. Observe that $x_0\otimes y_0 \in W.$ Then there exists a weak$^*$-open set $\bigcap\limits_{k=1}^{p}S(B_{{\mathcal L}(X,Y^*)^{*}}, T_k, \delta_k)$ such that $x_0\otimes y_0 \in \bigcap\limits_{k=1}^{p}S(B_{{\mathcal L}(X,Y^*)^{*}}, T_k, \delta_k) \subset W,$ where $T_k\in S_{{\mathcal L}(X,Y^*)}$ for all $k=1,\ldots,p$.

Then
\begin{equation}\notag
\begin{split}
(x_0\otimes y_0)(T_k)> 1-\delta_k \ \forall 1\leqslant k\leqslant p\\
\Rightarrow (y_0 \circ T_k)(x_0)> 1-\delta_k \ \forall 1\leqslant k\leqslant p.
\end{split}
\end{equation}
 Thus $$x_0\in \bigcap\limits_{k=1}^{p}S(B_{X^{**}}, y_0 \circ T_k, \|y_0 \circ T_k\|-1+\delta_k).$$
Let $x^{**}\in \bigcap\limits_{k=1}^{p}S(B_{X^{**}}, y_0 \circ T_k, \|y_0 \circ T_k\|-1+\delta_k).$
Then $$x^{**}\otimes y_0 \in  \bigcap\limits_{k=1}^{p}S(B_{{\mathcal L}(X,Y^*)^{*}}, T_k, \delta_k) \subset W.$$
Thus $x_i^{***}(x^{**})> \frac{1-\gamma_{i,j}}{y_j^{***}(y_0)}> 1-\alpha_i$ for all $i=1,\ldots,n$.
Hence $$x_0\in \bigcap\limits_{k=1}^{p}S(B_{X^{**}}, y_0 \circ T_k, \|y_0 \circ T_k\|-1+\delta_k) \subset \bigcap\limits_{i=1}^{n} S(B_{X^{**}}, x_i^{***}, \alpha_i).$$ Similarly, $$y_0\in \bigcap\limits_{k=1}^{p}S(B_{Y^{**}}, T_k(x_0), \|T_k(x_0)\|-1+\delta_k) \subset \bigcap\limits_{j=1}^{m} S(B_{Y^{**}}, y_j^{***}, \beta_j).$$
 Consequently, $x_0$ and $y_0$ are weak$^*$-weak PC of $B_{X^{**}}$ and $B_{Y^{**}}$ respectively.
\end{proof}

The example below (see also \cite{R1}) shows that $X$ and $Y$ can be considered in such a way that
 no simple tensor in $S_{{\mathcal L}(X,Y^*)^*}$ is weak$^*$-weak PC of $B_{{\mathcal L}(X,Y^*)^*}$.

\begin{ex}
Consider $L^1([0,1],X)(=L^1[0,1] \hat{\otimes}_{\pi} X)$ with Lebesgue measure on $[0,1]$.
 Suppose $f \otimes x$ is a Namioka point of $L^1([0,1],X)^{**}$, where $f \in S_{L^1([0,1]}$ and $x \in S_X$. Then by Theorem $\ref{stab w*-w pc}$, $f$ is a Namioka point  of $S_{L^1([0,1])^{**}}$. Let $\Omega$ be the Stone space of $L^\infty([0,1])$ and we use the canonical isometry of this space, with the space of continuous functions, $C(\Omega)$. Then $f \in S_{C(\Omega)^*}$ is a Namioka point. Since $B_{C(\Omega)^*}$ is the weak$^*$-closure of finitely supported measures, as $f$ is a Namioka point, we get that $f$ is a norm limit of a sequence of finitely supported measures.
We may assume without loss of generality that $f = \sum\limits_{i=1}^{\infty} \alpha_i t_i \delta_{w_i}$ for a sequence $(w_i)_i \subset \Omega$, scalars
$(\alpha_i)_i \subset [0,1]$ with $\sum\limits_{i=1}^{\infty} \alpha_i=1$ and $(t_i)_i \subset \{\pm\ 1\}$. Now by Lemma 2.1 in \cite{DMR} on the extremal nature of the set of Namioka points, , we get that for some $i$, $w_i$ is a Namioka point and hence $w_i$ is an isolated point of $\Omega$, the Stone space of $L^\infty [0,1]$.
  Since the Lebesgue measure is non-atomic, we get a contradiction.
\end{ex}

However, the example above does not imply that $B_{L^1([0,1],X)^{**}}$ has no Namioka points. Even when $X$ is an infinite dimensional reflexive space, we do not know if $S_{L^1([0,1],X)^{**}}$ has any Namioka points.

\begin{cor}\label{cor1}
Let X and Y be Banach spaces. Let $x_0\in S_{X}$ and $y_0\in S_{Y}$ be such that $x_0\otimes y_0 \in S_{{\mathcal L}(X,Y^*)^{*}}$ is a weak$^*$-strongly extreme point of $B_{{\mathcal L}(X,Y^*)^{*}}$. Then $x_0$ is weak$^*$-strongly extreme point of $B_{X^{**}}$ and $y_0$ is weak$^*$-strongly extreme point of $B_{Y^{**}}$.
\end{cor}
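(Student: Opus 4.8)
The plan is to reduce everything to Theorem~\ref{bb}, which characterizes weak$^*$-strongly extreme points as exactly those points that are simultaneously Namioka points (weak$^*$-weak PC) and extreme points. Applying that characterization with the underlying space taken to be $X^*$ (so that its dual is $X^{**}$), it suffices to establish two things for each component: that $x_0$ is a Namioka point of $B_{X^{**}}$ and that $x_0$ is an extreme point of $B_{X^{**}}$, and symmetrically for $y_0$ in $B_{Y^{**}}$.

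The continuity half is immediate. Since $x_0\otimes y_0$ is weak$^*$-strongly extreme, Theorem~\ref{bb} shows it is in particular a Namioka point of $B_{{\mathcal L}(X,Y^*)^{*}}$, and then Theorem~\ref{stab w*-w pc} applies verbatim to give that $x_0$ and $y_0$ are Namioka points of $B_{X^{**}}$ and $B_{Y^{**}}$ respectively. No new work is needed here.

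It remains to descend the extreme-point property to the biduals, in analogy with the corollary following Theorem~\ref{ma1} but now one level up. For the first slot, I would consider the map $\Phi_{y_0}\colon X^{**}\to {\mathcal L}(X,Y^*)^{*}$ determined by $\langle \Phi_{y_0}(u),T\rangle = u(y_0\circ T)$, where $y_0\circ T\in X^*$ is the functional $x\mapsto (Tx)(y_0)$; this map is linear, extends $x\mapsto x\otimes y_0$ on $X$, and satisfies $\|\Phi_{y_0}(u)\|\leqslant \|u\|$, so it carries $B_{X^{**}}$ into $B_{{\mathcal L}(X,Y^*)^{*}}$. The crucial observation is that $\Phi_{y_0}$ is injective: given $x^*\in X^*$, choose $y^*\in S_{Y^*}$ with $y^*(y_0)=1$ and put $T=x^*\otimes y^*$, so that $y_0\circ T = x^*$; hence $\Phi_{y_0}(u)=0$ forces $u(x^*)=0$ for every $x^*\in X^*$, i.e. $u=0$. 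Consequently, any nontrivial convex splitting $x_0=\tfrac12(u+v)$ inside $B_{X^{**}}$ would push forward to a nontrivial splitting $x_0\otimes y_0=\tfrac12(\Phi_{y_0}(u)+\Phi_{y_0}(v))$ inside $B_{{\mathcal L}(X,Y^*)^{*}}$, contradicting the extremality of $x_0\otimes y_0$. Thus $x_0$ is extreme in $B_{X^{**}}$, and the symmetric argument using the slot map $w\mapsto x_0\otimes w$ on $Y^{**}$ gives $y_0$ extreme in $B_{Y^{**}}$.

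Putting the halves together, $x_0$ is both a Namioka point and an extreme point of $B_{X^{**}}$, so the reverse implication of Theorem~\ref{bb} makes it a weak$^*$-strongly extreme point of $B_{X^{**}}$, and likewise $y_0$ in $B_{Y^{**}}$. The one step that genuinely needs care is the injectivity of the slot maps: without it a real splitting of $x_0$ could collapse after tensoring with $y_0$, and it is precisely the normalized rank-one operators $x^*\otimes y^*$ with $y^*(y_0)=1$ that rule this out. Here it is essential that $y_0\in Y$ and $x_0\in X$ genuinely, not merely in the biduals, since that is what makes $y_0\circ T$ and $Tx_0$ legitimate elements of $X^*$ and $Y^*$.
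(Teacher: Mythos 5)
Your proof is correct, and while its skeleton matches the paper's (split weak$^*$-strong extremality into the Namioka and extreme-point halves via Theorem~\ref{bb}, and dispose of the Namioka half by Theorem~\ref{stab w*-w pc}), your treatment of the extreme-point half is genuinely different from, and in one respect tighter than, the paper's. The paper invokes \cite[Lemma 5.1]{DMR} (weak$^*$-strext$(B_{Z^{**}})\subset Z\cap\mathrm{ext}(B_{Z^{**}})$ with $Z=X\hat{\otimes}_{\pi}Y$) to conclude that $x_0\otimes y_0$ is a preserved extreme point of $B_{X\hat{\otimes}_{\pi}Y}$, and then states that $x_0$ and $y_0$ are extreme points of $B_X$ and $B_Y$ --- but what the converse direction of Theorem~\ref{bb} actually requires is extremality in $B_{X^{**}}$ and $B_{Y^{**}}$, a strictly stronger statement that the paper leaves implicit. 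Your argument lands exactly on the needed statement: since Theorem~\ref{bb} already gives that $x_0\otimes y_0$ is extreme in the bidual ball $B_{{\mathcal L}(X,Y^*)^{*}}=B_{(X\hat{\otimes}_{\pi}Y)^{**}}$, your norm-one slot maps $\Phi_{y_0}(u)(T)=u(y_0\circ T)$ and $w\mapsto(T\mapsto w(Tx_0))$ transport any splitting of $x_0$ in $B_{X^{**}}$ (resp.\ of $y_0$ in $B_{Y^{**}}$) to a splitting of $x_0\otimes y_0$, and your injectivity check via the rank-one operators $x^*\otimes y^*$ with $y^*(y_0)=1$ (resp.\ $x^*(x_0)=1$) is exactly right and is indeed the crux. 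What your route buys is self-containedness (no appeal to \cite{DMR}) and bidual-level extremality obtained directly; what the paper's route buys is brevity and a connection to the preserved-extreme-point framework that it exploits again in the subsequent corollary via Theorem~\ref{lgthm1}. Your closing observation that $x_0\in X$ and $y_0\in Y$ (not merely in the biduals) is needed to make $y_0\circ T$ and $Tx_0$ elements of $X^*$ and $Y^*$ is also well taken.
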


\begin{proof}
Since $x_0\otimes y_0$ is a Namioka point  and extreme point in $B_{{\mathcal L}(X,Y^*)^{*}}$, by \cite[Lemma 5.1]{DMR}, $x_0\otimes y_0$ is extreme point of $B_{X\hat{\otimes}_{\pi} Y}.$ Hence $x_0$ and $y_0$ are extreme points of $B_X$ and $B_Y$ respectively. Rest follows from Theorem $\ref{bb}$ and $\ref{stab w*-w pc}$.

\end{proof}

From \cite[Lemma 5.1]{DMR}, it is known that, for any Banach space $X$,\\ weak$^*$-strext $(B_{X^{**}})\subset $ $X \bigcap$ ext $(B_{X^{**}})$, where weak$^*$-strext $(C)$ denotes the set of weak$^*$-strongly extreme points of $C$. As a consequence, every weak$^*$-strongly extreme point of $B_{X^{**}}$ is preserved extreme point of $B_X$.

\begin{thm}\label{lgthm1}
\cite[Theorem 1.1]{LGCZ}
Let $X$ and $Y$ be Banach spaces with $\mathcal{K}(X,Y^*)$ is separating for $X\hat{\otimes}_{\pi} Y$. If $z$ is a preserved extreme point of $B_{X\hat{\otimes}_{\pi} Y}$, then $z=x\otimes y$ where $x$ and $y$ are preserved extreme points of $B_X$ and $B_Y$ respectively.
\end{thm}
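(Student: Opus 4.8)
The plan is to split the statement into two parts: first, that a preserved extreme point $z$ of $B_{X\hat{\otimes}_{\pi} Y}$ must be an elementary tensor $x\otimes y$; and second, that the factors $x,y$ then inherit the property of being preserved extreme points. Writing $E=X\hat{\otimes}_{\pi} Y$, so that $E^{*}=\mathcal{L}(X,Y^{*})$ and $E^{**}=\mathcal{L}(X,Y^{*})^{*}$, I would use throughout that ``$z$ is a preserved extreme point of $B_E$'' means precisely $z\in E\cap\mathrm{ext}(B_{E^{**}})$, i.e. $z$ is an extreme point of the weak$^{*}$-compact convex set $B_{E^{**}}$. The first part is the heart of the matter; the second is routine de-tensoring.

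For the first part I would argue through representing measures. Since $B_{E}=\overline{\mathrm{co}}(B_X\otimes B_Y)$, write $z=\lim_n z_n$ in norm with each $z_n=\sum_i\lambda_i^{(n)}\,x_i^{(n)}\otimes y_i^{(n)}$ a finite convex combination of elementary tensors from $B_X\otimes B_Y$, and form the finitely supported probability measures $\mu_n=\sum_i\lambda_i^{(n)}\,\delta_{x_i^{(n)}\otimes y_i^{(n)}}$ on the weak$^{*}$-compact set $B_{E^{**}}$, whose barycenters are the $z_n$. Passing to a weak$^{*}$-cluster point $\mu$ of $(\mu_n)$ among the (weak$^{*}$-compact set of) probability measures on $B_{E^{**}}$, the barycenter map is continuous against each $T\in E^{*}$, so $\mu$ has barycenter $z$; and since each $\mu_n$ is carried by $B_X\otimes B_Y$, the limit $\mu$ is supported on its weak$^{*}$-closure $F:=\overline{B_X\otimes B_Y}^{\,w^{*}}\subset B_{E^{**}}$. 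Here I invoke that $z$ is extreme in $B_{E^{**}}$: by the uniqueness of representing measures for extreme points (Bauer's characterization, via Milman's theorem), the only probability measure on $B_{E^{**}}$ with barycenter $z$ is $\delta_z$. Hence $\mu=\delta_z$, and therefore $z\in F$.

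It then remains to show that a point of $F$ lying in $E$ is a genuine elementary tensor, and this is the step I expect to be the main obstacle, since the tensor map $(x,y)\mapsto x\otimes y$ is only separately, not jointly, weak$^{*}$-continuous. I would choose a net $x_\alpha\otimes y_\alpha\to z$ weak$^{*}$ with $x_\alpha\in B_X$, $y_\alpha\in B_Y$, pass to a subnet with $x_\alpha\to x^{**}$ and $y_\alpha\to y^{**}$ weak$^{*}$ in $B_{X^{**}}$, $B_{Y^{**}}$, and test against the rank-one (hence compact) operators $x^{*}\otimes y^{*}\in\mathcal{K}(X,Y^{*})$: as $\langle x_\alpha\otimes y_\alpha,\,x^{*}\otimes y^{*}\rangle=x^{*}(x_\alpha)\,y^{*}(y_\alpha)$ converges both to $\langle z,x^{*}\otimes y^{*}\rangle$ and to $x^{**}(x^{*})\,y^{**}(y^{*})$, I get $\langle z,x^{*}\otimes y^{*}\rangle=x^{**}(x^{*})\,y^{**}(y^{*})$ for all $x^{*},y^{*}$. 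This is exactly where the hypothesis that $\mathcal{K}(X,Y^{*})$ separates $X\hat{\otimes}_{\pi} Y$ enters: it is the richness of the compact operators as a test family that lets agreement there pin down the element. Finally, fixing $y^{*}$ with $y^{**}(y^{*})\neq0$ and using a representation $z=\sum_k a_k\otimes b_k$ with $\sum_k\|a_k\|\,\|b_k\|<\infty$, the functional $x^{*}\mapsto\langle z,x^{*}\otimes y^{*}\rangle$ equals $x^{*}\big(\sum_k y^{*}(b_k)a_k\big)$ with $\sum_k y^{*}(b_k)a_k\in X$; comparing with $x^{**}(x^{*})\,y^{**}(y^{*})$ forces $x^{**}\in X$, and symmetrically $y^{**}\in Y$. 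Setting $x=x^{**}$, $y=y^{**}$ yields $\langle z,x^{*}\otimes y^{*}\rangle=x^{*}(x)\,y^{*}(y)$ for all $x^{*},y^{*}$, whence $z=x\otimes y$ with $\|x\|=\|y\|=1$.

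For the second part, suppose $x=\tfrac12(\phi+\psi)$ with $\phi,\psi\in B_{X^{**}}$. Since $\|\phi\otimes y\|_{E^{**}}\le\|\phi\|\,\|y\|\le1$ and likewise for $\psi$, the identity $z=x\otimes y=\tfrac12(\phi\otimes y+\psi\otimes y)$ exhibits the extreme point $z$ of $B_{E^{**}}$ as a midpoint, forcing $\phi\otimes y=\psi\otimes y$; testing against $x^{*}\otimes y^{*}$ with $y^{*}(y)=1$ gives $\phi=\psi$, so $x\in\mathrm{ext}(B_{X^{**}})$ is a preserved extreme point of $B_X$, and symmetrically for $y$. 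The only genuinely delicate points in this scheme are the support-and-barycenter bookkeeping for $\mu$ and the failure of joint weak$^{*}$-continuity handled via compact operators; the remainder is routine.
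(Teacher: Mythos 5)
There is nothing in the paper to compare against: Theorem \ref{lgthm1} is quoted verbatim from \cite[Theorem 1.1]{LGCZ} without proof, so your attempt must be judged on its own merits (and against the cited source, whose overall strategy — Goldstine/Milman to place $z$ in $\overline{B_X\otimes B_Y}^{w^*}\subset B_{E^{**}}$, subnets $x_\alpha\to x^{**}$, $y_\alpha\to y^{**}$, testing against operators, then de-tensoring extremality — you have essentially reconstructed). Your execution, however, has one genuine gap, and it sits exactly where the theorem's hypothesis lives: after extracting the subnets you only ever test $z$ against rank-one operators $x^*\otimes y^*$, whereas the hypothesis grants separation by the \emph{compact} operators $\mathcal{K}(X,Y^*)$. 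These are not interchangeable: separation by rank-one (equivalently, finite-rank) operators amounts to injectivity of the canonical map of $X\hat{\otimes}_{\pi}Y$ into bilinear forms on $X^*\times Y^*$, a strictly stronger requirement; the compact-separation hypothesis is designed to cover, for instance, the case where $X^*$ or $Y^*$ has only the \emph{compact} approximation property, where finite-rank operators need be neither dense in $\mathcal{K}(X,Y^*)$ nor separating. The gap bites twice. (i) Your choice of $y^*$ with $y^{**}(y^*)\neq 0$ silently assumes $y^{**}\neq 0$, and the symmetric step assumes $x^{**}\neq 0$; if $x^{**}=0$ or $y^{**}=0$, rank-one testing only says that $z$ annihilates all finite-rank operators, which under the stated hypothesis does not force $z=0$ and so produces no contradiction with $\|z\|=1$. (ii) Your closing ``whence $z=x\otimes y$'' deduces equality of two elements of $X\hat{\otimes}_{\pi}Y$ from their agreement on rank-one operators alone — again precisely the separation property you have not been given.

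The repair is the one step of the cited proof you omitted, and it is where compactness of the test operators actually enters: for $K\in\mathcal{K}(X,Y^*)$ the set $K^{**}(B_{X^{**}})$ is relatively norm compact in $Y^*$, so $K^{**}$ is weak$^*$-to-norm continuous on bounded sets; hence along your subnet $\langle Kx_\alpha,y_\alpha\rangle\to\langle K^{**}x^{**},y^{**}\rangle$ (norm convergence in the first slot paired with a bounded weak$^*$-convergent net in the second), giving $\langle z,K\rangle=\langle K^{**}x^{**},y^{**}\rangle$ for \emph{every} compact $K$. With this identity both problems dissolve: if $x^{**}=0$ or $y^{**}=0$ then $z$ annihilates all of $\mathcal{K}(X,Y^*)$, contradicting the separation hypothesis since $\|z\|=1$; and once your (correct) computation with a representation $z=\sum_k a_k\otimes b_k$ shows $x^{**}=x\in X$ and $y^{**}=y\in Y$, the element $z-x\otimes y\in X\hat{\otimes}_{\pi}Y$ annihilates every compact operator, so $z=x\otimes y$. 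The rest of your proposal is sound: the representing-measure argument placing $z$ in $\overline{B_X\otimes B_Y}^{w^*}$ is correct (though it amounts to re-proving Milman's theorem, which together with Goldstine's theorem applied to $B_{E^{**}}=\overline{\mathrm{co}}^{w^*}(B_X\otimes B_Y)$ gives the inclusion in two lines), and the final de-tensoring of extremality is fine once you make explicit that $\phi\otimes y\in E^{**}$ denotes the functional $T\mapsto \phi(y\circ T)$, where $y\circ T\in X^*$, which is linear in $\phi$ and of norm at most $\|\phi\|\,\|y\|$.
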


 Taking into account Corollary $\ref{cor1}$ and Theorem $\ref{lgthm1}$, we have following.

\begin{cor}
Let $X$ and $Y$ be Banach spaces with $\mathcal{K}(X,Y^*)$ is separating for $X\hat{\otimes}_{\pi} Y$. If $z\in S_{{\mathcal L}(X,Y^*)^{*}}$ is weak$^*$-strongly extreme point of $B_{{\mathcal L}(X,Y^*)^{*}}$, then $z=x\otimes y$ for some weak$^*$-strongly extreme points $x\in S_X$ and $y\in S_Y$. In other words, \\
 weak$^*$-strext $(B_{{\mathcal L}(X,Y^*)^{*}})$ $\subset$ weak$^*$-strext $(B_{X^{**}})$ $\otimes $ weak$^*$-strext $(B_{Y^{**}})\subset B_X \otimes B_Y$.
\end{cor}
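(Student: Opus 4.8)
The plan is to run the imported tools in sequence, exploiting the identification ${\mathcal L}(X,Y^*)^{*}=(X\hat{\otimes}_{\pi}Y)^{**}$ that follows from ${\mathcal L}(X,Y^*)$ being the dual of $X\hat{\otimes}_{\pi}Y$. Under this identification the weak$^*$-topology on ${\mathcal L}(X,Y^*)^{*}$ is exactly the weak$^*$-topology of $(X\hat{\otimes}_{\pi}Y)^{**}$ induced by its predual ${\mathcal L}(X,Y^*)$, so that the hypothesis ``$z$ is a weak$^*$-strongly extreme point of $B_{{\mathcal L}(X,Y^*)^{*}}$'' reads verbatim as ``$z$ is a weak$^*$-strongly extreme point of $B_{(X\hat{\otimes}_{\pi}Y)^{**}}$''. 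This reframing is what lets the tensor-decomposition machinery engage.

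First I would apply the Fact recorded just before Theorem \ref{lgthm1}, namely weak$^*$-strext$(B_{W^{**}})\subset W\cap\mathrm{ext}(B_{W^{**}})$, taken with $W=X\hat{\otimes}_{\pi}Y$. This shows at once that $z$ lies in the canonical image of $X\hat{\otimes}_{\pi}Y$ and that $z$ is a preserved extreme point of $B_{X\hat{\otimes}_{\pi}Y}$; in other words it transports the problem from the bidual down to the projective tensor product itself, where Theorem \ref{lgthm1} is available.

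Next I would invoke Theorem \ref{lgthm1}: since $\mathcal{K}(X,Y^*)$ is separating for $X\hat{\otimes}_{\pi}Y$ and $z$ is now a preserved extreme point of $B_{X\hat{\otimes}_{\pi}Y}$, it decomposes as $z=x\otimes y$ with $x\in S_X$ and $y\in S_Y$ preserved extreme points of $B_X$ and $B_Y$. Finally, because $z=x\otimes y$ is by hypothesis a weak$^*$-strongly extreme point of $B_{{\mathcal L}(X,Y^*)^{*}}$, Corollary \ref{cor1} upgrades the components, giving that $x$ and $y$ are in fact weak$^*$-strongly extreme points of $B_{X^{**}}$ and $B_{Y^{**}}$. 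Reading this chain as an inclusion of sets yields the first displayed containment; the second containment weak$^*$-strext$(B_{X^{**}})\otimes$weak$^*$-strext$(B_{Y^{**}})\subset B_X\otimes B_Y$ is immediate from the same Fact applied to $X$ and to $Y$, since a weak$^*$-strongly extreme point of a bidual sits inside the base space.

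The argument is essentially a synthesis rather than a fresh computation, so I do not anticipate a serious obstacle; the only genuine content is matching the hypotheses across the three imported results. The crucial hinge is the Fact that a weak$^*$-strongly extreme point of $(X\hat{\otimes}_{\pi}Y)^{**}$ is automatically a preserved extreme point of $X\hat{\otimes}_{\pi}Y$, for this is precisely what converts the weak$^*$-strongly-extreme hypothesis into the preserved-extreme input demanded by Theorem \ref{lgthm1}. The separating hypothesis on $\mathcal{K}(X,Y^*)$ is indispensable, as it is exactly what forces the elementary-tensor form $z=x\otimes y$ in that theorem.
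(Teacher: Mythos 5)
Your proposal is correct and follows essentially the same route as the paper, which obtains the corollary exactly by combining the fact that weak$^*$-strext$(B_{W^{**}})\subset W\cap\mathrm{ext}(B_{W^{**}})$ (applied to $W=X\hat{\otimes}_{\pi}Y$ under the identification ${\mathcal L}(X,Y^*)^{*}=(X\hat{\otimes}_{\pi}Y)^{**}$) with Theorem \ref{lgthm1} and Corollary \ref{cor1}. Your write-up simply makes explicit the chain the paper compresses into the one-line remark ``Taking into account Corollary \ref{cor1} and Theorem \ref{lgthm1}.''
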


Using technique similar to the ones given  in Proposition $\ref{c1}$, we get the following.

\begin{prop}\label{p1}
Let $Y$ be a finite dimensional Banach space and $y_0\in S_Y$ be unitary element. Then for any Banach space $X$ with $x_0\in S_X$ is a Namioka point, we have $x_0 \otimes y_0$ is Namioka point of $B_{{\mathcal L}(X,Y^*)^{*}}$.
\end{prop}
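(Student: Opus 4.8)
The plan is to mirror the proof of Theorem \ref{c1}, but working one level higher in the duality, with the factors $X$, $Y$ (rather than their duals) and with $y_0$ unitary in $Y$ playing the role that $y_0^*$ unitary in $Y^*$ played there. First I would record the structural simplifications coming from $\dim Y<\infty$: a finite-dimensional $Y$ is automatically reflexive, has the approximation property, and satisfies $\mathcal{L}(X,Y^*)=\mathcal{K}(X,Y^*)$ and $\mathcal{L}(X^{**},Y^*)=\mathcal{K}(X^{**},Y^*)$, since every operator into a finite-dimensional space is compact. Hence the identification from Theorem \ref{c1} applies and gives $\mathcal{L}(X,Y^*)^{*}=X^{**}\hat{\otimes}_{\pi}Y$, whence $\mathcal{L}(X,Y^*)^{**}=(X^{**}\hat{\otimes}_{\pi}Y)^{*}=\mathcal{L}(X^{**},Y^*)$ by the standard projective duality. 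Under these identifications the canonical inclusion $\mathcal{L}(X,Y^*)\hookrightarrow\mathcal{L}(X^{**},Y^*)$ is $T\mapsto T^{**}$, so a rank-one operator $x^*\otimes y^*$ with $x^*\in X^*$, $y^*\in Y^*$ lies in the canonical image of $\mathcal{L}(X,Y^*)$ and acts on $X^{**}$ by $x^{**}\mapsto x^{**}(x^*)\,y^*$. Applying Lemma \ref{l1} to the Banach space $\mathcal{L}(X,Y^*)$, the point $x_0\otimes y_0\in S_{\mathcal{L}(X,Y^*)^{*}}$ is a Namioka point of $B_{\mathcal{L}(X,Y^*)^{*}}$ if and only if it has a unique norm-preserving extension from $\mathcal{L}(X,Y^*)$ to $\mathcal{L}(X^{**},Y^*)$, so it suffices to show any two such extensions agree.

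Next I would set $S_{y_0}=\{y^*\in S_{Y^*}:y^*(y_0)=1\}$; since $y_0$ is unitary, $\mathrm{span}(S_{y_0})=Y^*$. Let $\Phi_1,\Phi_2\in S_{\mathcal{L}(X^{**},Y^*)^{*}}$ be two norm-preserving extensions of $x_0\otimes y_0$. For each $i$ and each $y^*\in S_{y_0}$, I would define $\xi_{i,y^*}\in X^{****}$ by $\xi_{i,y^*}(G)=\Phi_i(G\otimes y^*)$ for $G\in X^{***}$, where $G\otimes y^*\in\mathcal{L}(X^{**},Y^*)$ is the rank-one operator $x^{**}\mapsto G(x^{**})\,y^*$. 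Since $\|G\otimes y^*\|=\|G\|\,\|y^*\|=\|G\|$ and $\|\Phi_i\|=1$, we have $\|\xi_{i,y^*}\|\le 1$. The key verification is that $\xi_{i,y^*}$ restricts to $x_0$ on $X^*$: for $x^*\in X^*$ the operator $x^*\otimes y^*$ sits in the canonical image of $\mathcal{L}(X,Y^*)$ by the first paragraph, so $\xi_{i,y^*}(x^*)=\Phi_i(x^*\otimes y^*)=(x_0\otimes y_0)(x^*\otimes y^*)=x^*(x_0)\,y^*(y_0)=x^*(x_0)$, using $y^*(y_0)=1$. Taking the supremum over $x^*\in B_{X^*}$ gives $\|\xi_{i,y^*}\|\ge\|x_0\|=1$, so each $\xi_{i,y^*}$ is a norm-preserving extension of $x_0$ (viewed in $X^{**}=(X^*)^{*}$) to $X^{***}$.

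Finally, because $x_0$ is a Namioka point of $B_{X^{**}}$, Lemma \ref{l1} applied to $X^*$ forces this norm-preserving extension to be unique, whence $\xi_{1,y^*}=\xi_{2,y^*}$ for every $y^*\in S_{y_0}$; equivalently $\Phi_1(G\otimes y^*)=\Phi_2(G\otimes y^*)$ for all $G\in X^{***}$ and all $y^*\in S_{y_0}$. Since $\mathrm{span}(S_{y_0})=Y^*$, linearity extends this to all $y^*\in Y^*$. As $Y^*$ is finite dimensional, every $S\in\mathcal{L}(X^{**},Y^*)$ is a finite sum $\sum_{k}G_k\otimes e_k^*$ of such rank-one operators, so $\Phi_1$ and $\Phi_2$ agree on all of $\mathcal{L}(X^{**},Y^*)$, i.e.\ $\Phi_1=\Phi_2$. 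Thus $x_0\otimes y_0$ has a unique norm-preserving extension and is a Namioka point of $B_{\mathcal{L}(X,Y^*)^{*}}$.

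I expect the main obstacle to be the duality bookkeeping in the first paragraph: ensuring the isomorphisms $\mathcal{L}(X,Y^*)^{*}=X^{**}\hat{\otimes}_{\pi}Y$ and $\mathcal{L}(X,Y^*)^{**}=\mathcal{L}(X^{**},Y^*)$ are compatible with the canonical bidual embedding, so that the identity $(x_0\otimes y_0)(x^*\otimes y^*)=x^*(x_0)\,y^*(y_0)$ used in the key step is legitimate. Once the rank-one operators $x^*\otimes y^*$ are correctly located inside the canonical image of $\mathcal{L}(X,Y^*)$, the remainder is a routine transcription of Theorem \ref{c1}, with the roles of the factors shifted up one level and the unitarity of $y_0$ used exactly as the unitarity of $y_0^*$ was used there.
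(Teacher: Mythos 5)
Your proof is correct and is exactly the argument the paper intends: the paper offers no details for Proposition \ref{p1} beyond the remark that it follows by the technique of Theorem \ref{c1}, and your transcription of that technique one level up in the duality (extensions from $\mathcal{L}(X,Y^*)$ to $\mathcal{L}(X^{**},Y^*)$ via Lemma \ref{l1}, unitarity of $y_0$ giving $\mathrm{span}(S_{y_0})=Y^*$, and finite-dimensionality of $Y^*$ to decompose arbitrary operators into rank-one pieces) is precisely what is meant. Your care with the identification $\mathcal{L}(X,Y^*)^{**}=\mathcal{L}(X^{**},Y^*)$ and the location of $x^*\otimes y^*$ in the canonical image of $\mathcal{L}(X,Y^*)$ fills in the one point the paper leaves implicit.
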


\section{Stability of points of continuity}

In this section we consider the weaker version of denting points, by considering only points of weak-norm continuity on $S_X$. We first obtain a unit ball analogue of the results from \cite{W}. A connection to the weak$^*$-topology comes from the fact that $x_0\in S_X$ is weak-norm PC in $B_X$ if and only if $x_0$ is weak$^*$-norm PC in $B_{X^{**}}$, see \cite{HL}. In particular we get that $x_0$ is a weak$^\ast$-norm PC in all higher-order even duals of $X$.

\begin{prop}
Let $X$ and $Y$ be Banach spaces. Also let $x_0\in S_X$, $y_0\in S_Y$ and $x_0 \otimes y_0$ be weak-norm PC in $B_{X\hat{\otimes}_{\pi} Y}$. Then $x_0$ is weak-norm PC in $B_X$ and $y_0$ is weak-norm PC in $B_Y$.
\end{prop}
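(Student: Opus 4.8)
The plan is to prove the statement for points of weak-norm continuity using the same slicing mechanism that drove Theorem \ref{ma1} and Theorem \ref{stab w*-w pc}, but now working directly in $B_{X\hat{\otimes}_{\pi} Y}$ with the \emph{norm} topology on the target rather than the weak topology. I assume $x_0\otimes y_0$ is weak-norm PC in $B_{X\hat{\otimes}_{\pi} Y}$, and I want to show $x_0$ is weak-norm PC in $B_X$; the argument for $y_0$ is symmetric. Fix $\e>0$ and seek a weakly open neighbourhood of $x_0$ in $B_X$ whose points are within $\e$ of $x_0$ in norm.

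First I would translate the weak-norm continuity hypothesis into a workable form. Since $x_0\otimes y_0$ is weak-norm PC, for the prescribed $\e$ there is a weakly open set in $B_{X\hat{\otimes}_{\pi} Y}$ containing $x_0\otimes y_0$ whose diameter is less than $\e$. A weak neighbourhood in $B_{X\hat{\otimes}_{\pi} Y}$ is determined by finitely many functionals $\Phi_k\in \mathcal{L}(X,Y^\ast)=(X\hat{\otimes}_{\pi} Y)^\ast$, say via $\bigcap_k S(B_{X\hat{\otimes}_{\pi} Y}, \Phi_k, \delta_k)$. Next I would lift a weak neighbourhood of $x_0$ in $B_X$ up to the tensor product: given functionals $x_1^*,\dots,x_n^*\in S_{X^*}$ defining a weak neighbourhood of $x_0$, I form the functionals $x_i^*\otimes y_0^*\in\mathcal{L}(X,Y^\ast)$ (for a fixed $y_0^*\in S_{Y^*}$ with $y_0^*(y_0)=1$, chosen by Hahn--Banach) and consider the corresponding slices of $B_{X\hat{\otimes}_{\pi} Y}$. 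Because $(x_i^*\otimes y_0^*)(x_0\otimes y_0)=x_i^*(x_0)$, the map $x\mapsto x\otimes y_0$ sends an appropriate weak neighbourhood of $x_0$ in $B_X$ into a weak neighbourhood of $x_0\otimes y_0$ in $B_{X\hat{\otimes}_{\pi} Y}$.

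The key step is then the following: intersect with the small-diameter weak neighbourhood supplied by the PC hypothesis. This gives a weakly open $U\subseteq B_X$ with $x_0\in U$ such that for every $x\in U$, the element $x\otimes y_0$ lies in the $\e$-small set, so $\|x\otimes y_0 - x_0\otimes y_0\|_\pi<\e$. Finally I would use the norm estimate $\|x\otimes y_0-x_0\otimes y_0\|_\pi=\|(x-x_0)\otimes y_0\|_\pi=\|x-x_0\|\,\|y_0\|=\|x-x_0\|$, since the projective norm of an elementary tensor is the product of the norms and $\|y_0\|=1$. Thus $\|x-x_0\|<\e$ for all $x\in U$, which is exactly weak-norm continuity of the identity at $x_0$ in $B_X$.

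I expect the main obstacle to be the bookkeeping that guarantees the lifted slices actually trap $x\otimes y_0$ inside the PC-neighbourhood, rather than merely inside the lifted neighbourhood: one must choose the radii $\delta_k$ and the $y_0^*$-normalization so that membership of $x_0\otimes y_0$ in each $S(B_{X\hat{\otimes}_{\pi} Y},\Phi_k,\delta_k)$ is inherited by $x\otimes y_0$ when $x$ ranges over a sufficiently small weak neighbourhood of $x_0$. This is precisely the calibration of $\e$, the $\gamma_{ij}$, and the $\delta_k$ carried out in the proof of Theorem \ref{ma1}, and I would reuse that mechanism almost verbatim; the only genuine simplification here is that the target topology is the norm, so the final diameter estimate is immediate once the elementary-tensor norm identity is invoked.
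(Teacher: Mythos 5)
Your proposal is correct and is essentially the paper's own argument: you pull back a weakly open set of norm-diameter less than $\varepsilon$ around $x_0\otimes y_0$ through the weak-weak continuous section $x\mapsto x\otimes y_0$ (the pullback of each $\Phi_k$ being the functional $y_0\circ\Phi_k\in X^*$, which yields exactly the paper's neighbourhood $W_1$), and you finish with the cross-norm identity $\|x\otimes y_0-x_0\otimes y_0\|_\pi=\|(x-x_0)\otimes y_0\|_\pi=\|x-x_0\|$. Two trimmings would align it fully with the paper: describe the basic weak neighbourhood as $\{\tau\in B_{X\hat{\otimes}_{\pi} Y}:|\Phi_k(\tau)-\Phi_k(x_0\otimes y_0)|<\delta_k\}$ rather than as an intersection of slices through the point (a general weakly open set need not contain such slice intersections), and drop both the auxiliary $y_0^*$ and the Theorem \ref{ma1}-style calibration, since the preimage of the small-diameter set under $x\mapsto x\otimes y_0$ is already weakly open and no trapping argument is needed.
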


\begin{proof}
Let $\varepsilon >0$. Since $x_0 \otimes y_0$ is weak-norm PC in $B_{X\hat{\otimes}_{\pi} Y}$, there exists a weakly open set
$$W=\{\tau \in B_{(X\hat{\otimes}_{\pi} Y)} : |T_i(\tau) - T_i(x_0 \otimes y_0)| < \gamma, \ i=1,\ldots,p  \}$$
where $T_1,\ldots T_p\in (X\hat{\otimes}_{\pi} Y)^* = \mathcal{L}(X,Y^*)$
and $x_0 \otimes y_0 \in W \subset B(x_0\otimes y_0, \ \varepsilon)$. Consider the weakly open sets
$$W_1=\{x\in B_{X} : |(y_0\circ T_i)(x) - (y_0\circ T_i)(x_0)|<\gamma, \ i=1,\ldots,p \}$$
$$W_2=\{y\in B_{Y} : |(T_i(x_0))(y) - (T_i(x_0))(y_0)|<\gamma, \ i=1,\ldots,p \}$$
in $B_X$ and $B_Y$ respectively. Then $x_0\in W_1$ and $y_0\in W_2$. Moreover, $W_1\otimes \{y_0\}\subset W$ and $\{x_0\}\otimes W_2\subset W$. Therefore, we can conclude $x_0\in W_1\subset B(x_0, \varepsilon)$ and $y_0\in W_2\subset B(y_0, \varepsilon)$. Indeed, let $x\in W_1$. Then $x\otimes y_0 \in W$. Thus $\|x-x_0\|=\|(x\otimes y_0) - (x_0\otimes y_0)\|<\varepsilon$. Hence, $W_1\subset B(x_0, \varepsilon)$ and similarly $W_2\subset B(y_0, \varepsilon)$.
\end{proof}

\begin{cor}
Let $X$ and $Y$ be Banach spaces. Also let $x_0\in S_X$, $y_0\in S_Y$ and $x_0 \otimes y_0$ be weak$^*$-norm PC in $B_{{\mathcal L}(X,Y^*)^{*}}$. Then $x_0$ is weak$^*$-norm PC in $B_{X^{**}}$ and $y_0$ is weak$^*$-norm PC in $B_{Y^{**}}$.
\end{cor}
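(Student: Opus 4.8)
The plan is to reduce the statement to the preceding Proposition by means of the equivalence recorded at the start of this section, namely (following \cite{HL}) that for any Banach space $Z$ and any $z_0\in S_Z$, the point $z_0$ is weak-norm PC in $B_Z$ if and only if it is weak$^*$-norm PC in $B_{Z^{**}}$. The crucial observation is that under the isometric identification ${\mathcal L}(X,Y^*)=(X\hat{\otimes}_{\pi} Y)^*$ we have ${\mathcal L}(X,Y^*)^{*}=(X\hat{\otimes}_{\pi} Y)^{**}$, so the hypothesis is really a statement about the bidual of the single Banach space $Z:=X\hat{\otimes}_{\pi} Y$.

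First I would note that $x_0\otimes y_0\in S_Z$, since $\|x_0\otimes y_0\|_{\pi}=\|x_0\|\,\|y_0\|=1$. Applying the \cite{HL} equivalence to $Z$ with $z_0=x_0\otimes y_0$, the assumption that $x_0\otimes y_0$ is weak$^*$-norm PC in $B_{Z^{**}}=B_{{\mathcal L}(X,Y^*)^{*}}$ yields that $x_0\otimes y_0$ is weak-norm PC in $B_Z=B_{X\hat{\otimes}_{\pi} Y}$.

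Next I would invoke the preceding Proposition, which asserts precisely that if $x_0\otimes y_0$ is weak-norm PC in $B_{X\hat{\otimes}_{\pi} Y}$, then $x_0$ is weak-norm PC in $B_X$ and $y_0$ is weak-norm PC in $B_Y$. Finally, applying the \cite{HL} equivalence once more, this time separately to $X$ with the point $x_0$ and to $Y$ with the point $y_0$, transfers these component conclusions back up to the biduals: $x_0$ is weak$^*$-norm PC in $B_{X^{**}}$ and $y_0$ is weak$^*$-norm PC in $B_{Y^{**}}$, as required.

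Since every step is a direct application of an already-established result, there is no substantive obstacle; the only point demanding care is the bookkeeping of the three invocations of the equivalence and the directions in which each is used — once to descend from the bidual hypothesis to the projective tensor product $X\hat{\otimes}_{\pi} Y$, and twice to lift the component conclusions back up to $X^{**}$ and $Y^{**}$.
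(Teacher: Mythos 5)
Your proposal is correct and is precisely the argument the paper intends: the corollary is stated without a separate proof because it follows from the preceding Proposition together with the fact from \cite{HL} (quoted at the start of Section 4) that $z_0\in S_Z$ is weak-norm PC in $B_Z$ if and only if it is weak$^*$-norm PC in $B_{Z^{**}}$, applied exactly as you do — once to $Z=X\hat{\otimes}_{\pi} Y$ (whose bidual is ${\mathcal L}(X,Y^*)^{*}$) to descend, and once each to $X$ and $Y$ to lift the conclusions to $B_{X^{**}}$ and $B_{Y^{**}}$. Your bookkeeping of the three invocations, including the observation $\|x_0\otimes y_0\|_{\pi}=1$, is accurate and complete.
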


We recall from \cite{HWW} that a closed subspace $J \subset X$ is said to be an $M$-ideal, if there is a linear projection $P: X^* \rightarrow X^*$ such that $\|x^*\|=\|P(x^*)\|+\|x^*-P(x^*)\|$ for all $x^\ast \in X^*$ and $\mathrm{ker}(P)=J^\bot$. See \cite[Chapter VI]{HWW} for examples of spaces $X,Y$, for which ${\mathcal K}(X,Y)$ is an $M$-ideal in ${\mathcal L}(X,Y)$. It is known that $M$-ideals which are not $M$-summands (called proper $M$-ideals)do not inherit strongly extreme points or denting points, see \cite{HR}.
Our next theorem is a point of continuity version of this nature. Let $\pi: X \rightarrow X/J$ be the quotient map. We recall that under the canonical identifications, $\pi^{\ast\ast}$ is the quotient map on the bidual, $X^{\ast\ast} \rightarrow X^{\ast\ast}/J^{\bot \bot} $
\begin{thm}\label{mres1}
Let $J \subset X$ be a proper  $M$-ideal such that $X/J$ is not reflexive. Let $x_0 \in S_X$ be a Namioka point. Then $\|\pi(x_0)\|=1$ and is a Namioka point of $X/J$. In particular, $J$ does not contain any Namioka points of $X$. Similar conclusion holds for points of weak-norm PC of $S_X$ when $J$ is of infinite codimension.
\end{thm}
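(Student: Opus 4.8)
The plan is to exploit the $M$-ideal structure after passing to the bidual. Since $J$ is an $M$-ideal, $J^{\perp}$ is an $L$-summand in $X^*$, so $X^* = J^{\perp} \oplus_1 J^*$, where $J^{\perp} \cong (X/J)^*$ and the $L$-complement is isometric to $J^*$ via restriction. Dualizing yields the $M$-decomposition $X^{**} = (X/J)^{**} \oplus_\infty J^{**}$, in which $\pi^{**}$ is the projection onto the first summand and $J^{\perp\perp} = J^{**}$ is the second. Two structural consequences drive everything: the unit ball factors as $B_{X^{**}} = B_{(X/J)^{**}} \times B_{J^{**}}$ (the norm being the maximum of the coordinate norms), and both the weak$^*$ and the weak topologies on $X^{**}$ are the corresponding product topologies, because the predual $X^* = (X/J)^* \oplus_1 J^*$ and the dual $X^{***} = (X/J)^{***} \oplus_1 J^{***}$ both split as $\ell_1$-sums, so the defining pairings decouple. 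Writing $x_0 = (u_0, v_0)$ with $u_0 = \pi(x_0) \in X/J$ and $v_0 \in J^{**}$, we have $\max(\|u_0\|, \|v_0\|) = 1$.

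First I would extract the coordinate information. Given any net $u_\alpha \to u_0$ weak$^*$ in $B_{(X/J)^{**}}$, the net $(u_\alpha, v_0)$ lies in $B_{X^{**}}$ and converges weak$^*$ to $x_0$; since $x_0$ is a Namioka point (weak$^*$-weak PC of $B_{X^{**}}$), this net converges weakly, and projecting onto the first coordinate gives $u_\alpha \to u_0$ weakly. Hence $u_0 = \pi(x_0)$ is a weak$^*$-weak PC of $B_{(X/J)^{**}}$.

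The heart of the argument is to show $\|u_0\| = 1$. Suppose instead $\|u_0\| < 1$; then $\|v_0\| = 1$. Using that $X/J$ is not reflexive, the weak and weak$^*$ topologies differ on bounded subsets of $(X/J)^{**}$, so I may choose a bounded net $\xi_\alpha \to 0$ weak$^*$ but not weakly, rescaled so that $\|\xi_\alpha\| \le 1 - \|u_0\|$. Then $(u_0 + \xi_\alpha, v_0)$ stays in $B_{X^{**}}$ (its norm is $\max(\|u_0 + \xi_\alpha\|, 1) = 1$) and converges weak$^*$ to $x_0$, yet it does not converge weakly, since its first coordinate does not. This contradicts $x_0$ being a Namioka point. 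Therefore $\|\pi(x_0)\| = \|u_0\| = 1$, and combined with the previous paragraph $u_0 \in S_{X/J}$ is a Namioka point of $X/J$. The ``in particular'' clause is then immediate: if $x_0 \in J$ then $\pi(x_0) = 0$, contradicting $\|\pi(x_0)\| = 1$.

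The main obstacle is to pinpoint where the hypotheses enter: the entire reduction rests on the product structure of the ball and of the two topologies, and the contradiction in the last step is powered precisely by the ``slack'' $\|\pi(x_0)\| < 1$ together with the non-reflexivity of $X/J$, which is exactly what supplies a bounded weak$^*$-null net that is not weakly null. For the weak-norm PC statement I would run the same scheme after passing to the bidual through the equivalence that $x_0$ is weak-norm PC of $B_X$ if and only if it is weak$^*$-norm PC of $B_{X^{**}}$ (\cite{HL}). The coordinate step then shows that $\pi(x_0)$ is weak$^*$-norm PC of $B_{(X/J)^{**}}$, and in the norm step the required perturbing net must be weak$^*$-null but not norm-null; such a net exists on the sphere of $(X/J)^{**}$ precisely when $(X/J)^{**}$ is infinite-dimensional, i.e. when $J$ has infinite codimension, which is why that hypothesis replaces non-reflexivity in this case.
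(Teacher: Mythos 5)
Your proposal is correct and follows essentially the same route as the paper's proof: the $M$-ideal decomposition $X^{**}=(X/J)^{**}\oplus_\infty J^{\perp\perp}$, a perturbation of the quotient coordinate by a bounded weak$^*$-convergent but not weakly convergent net (supplied by the non-reflexivity of $X/J$ and the slack $\|\pi(x_0)\|<1$) to force $\|\pi(x_0)\|=1$, the coordinate-restriction argument for the point-of-continuity property of $\pi(x_0)$, and the same scheme for weak-norm PC with infinite codimension providing a weak$^*$-null, non-norm-null net. The only difference is cosmetic: you make explicit the product structure of the ball and of the two topologies, and the coordinate step that the paper dismisses as ``easy to see.''
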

\begin{proof} Since $J$ is an $M$-ideal of $X$, we have, $X^{**}= (J^*)^\bot \bigoplus_{\infty} J^{\bot\bot}$ ($ \ell^\infty$-sum) and $(J^*)^\bot$ is infinite dimensional. Suppose $x_0 \in S_X$ is a Namioka point  and $d(x_0,J)=d(x_0, J^{\bot\bot})<1$. Let $x_0 = j_0^{\ast\ast}+ \Lambda_0$ for $j_0^{\ast\ast}  \in J^{\bot\bot}$ and $\Lambda_0 \in (J^\ast)^\bot$ . It is easy to see that $d(x_0,J) = \|\Lambda_0\|<1$. Now since $(J^*)^\bot$ is not reflexive and as Namioka points lie on the unit sphere,  there is a net $\{x^{**}_{\alpha}\}_{\alpha \in \Delta } \subset S_{(J^*)^\bot}$ such that $x^{**}_{\alpha} \rightarrow \Lambda_0$ in the weak*-topology but not weakly. Now $\|j_0^{\ast\ast}+x^{**}_{\alpha}\|= \max \{ \|j_0^{\ast\ast}\|,1\}=1$ and $j_0^{\ast\ast}+x^{**}_{\alpha} \rightarrow x_0$ in the weak$^\ast$-topology but not weakly. This contradiction shows that $d(x_0,J)=1$. That $\pi(x_0)$ is a Namioka point is easy to see.
\vskip .5 em
The statement about weak-norm PC is similarly proved.
\end{proof}

The following corollary is now easy to see using arguments similar to the ones given above. See \cite[Chapter VI]{HWW} for the examples of spaces that satisfy the assumptions of this corollary. In particular, for $1 <p< \infty$, ${\mathcal K}(\ell^p)$ is an $M$-ideal of infinite codimension in ${\mathcal L}(\ell^p)$.
\begin{cor}
Suppose $X$, $Y$ are such that ${\mathcal K}(X,Y^\ast)$ is an $M$-ideal of infinite codimension in ${\mathcal L}(X,Y^\ast)$. Then no compact operator is a weak$^*$-norm PC in $S_{{\mathcal L}(X,Y^\ast)}$.
\end{cor}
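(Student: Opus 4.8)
The plan is to obtain this as a direct application of Theorem \ref{mres1}, taking the ambient space to be $\mathcal{L}(X,Y^*)$ and the $M$-ideal to be $J=\mathcal{K}(X,Y^*)$. Since $J$ is assumed to be a proper $M$-ideal of infinite codimension in $\mathcal{L}(X,Y^*)$, the weak-norm PC clause of Theorem \ref{mres1} applies directly and yields that $J=\mathcal{K}(X,Y^*)$ contains no weak-norm PC point of $S_{\mathcal{L}(X,Y^*)}$, where ``weak'' here refers to the topology $\sigma(\mathcal{L}(X,Y^*),\mathcal{L}(X,Y^*)^*)$ induced by the full dual.

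The second ingredient I would record is a comparison of topologies. Using $\mathcal{L}(X,Y^*)=(X\hat{\otimes}_{\pi} Y)^*$, the weak$^*$-topology in question is $\sigma(\mathcal{L}(X,Y^*),X\hat{\otimes}_{\pi} Y)$, which is coarser than the weak topology $\sigma(\mathcal{L}(X,Y^*),(X\hat{\otimes}_{\pi} Y)^{**})$, because $X\hat{\otimes}_{\pi} Y$ embeds canonically in its bidual. Hence every weak$^*$-open set is weakly open, and any point at which the identity map $(B_{\mathcal{L}(X,Y^*)},\text{weak}^*)\to(B_{\mathcal{L}(X,Y^*)},\text{norm})$ is continuous is automatically a point of continuity for $(B_{\mathcal{L}(X,Y^*)},\text{weak})\to(B_{\mathcal{L}(X,Y^*)},\text{norm})$. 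In short, weak$^*$-norm PC implies weak-norm PC on $S_{\mathcal{L}(X,Y^*)}$.

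Combining the two, I would argue by contradiction. If some compact operator $T$ lying on $S_{\mathcal{L}(X,Y^*)}$ were a weak$^*$-norm PC of the ball, then by the topology comparison $T$ would be a weak-norm PC of $S_{\mathcal{L}(X,Y^*)}$, contradicting the first step, which forbids any such point inside $\mathcal{K}(X,Y^*)$. Therefore no compact operator on the unit sphere can be a weak$^*$-norm PC.

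The argument is genuinely short, so I do not expect a deep obstacle; the points requiring care are the direction of the topology comparison (weak$^*$ is coarser than weak, so the weak$^*$ version of the PC property is the stronger one and the implication runs the right way), and the bookkeeping that the infinite-codimension hypothesis is precisely what the weak-norm PC clause of Theorem \ref{mres1} requires, as opposed to the non-reflexivity hypothesis used there for Namioka points. I would also note explicitly that the statement concerns compact operators that happen to lie on $S_{\mathcal{L}(X,Y^*)}$, since that is where the point-of-continuity notion is being tested.
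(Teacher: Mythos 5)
Your proof is correct and follows essentially the route the paper intends: the paper's entire justification is that the corollary is ``easy to see using arguments similar to the ones given above,'' i.e.\ the weak-norm PC clause of Theorem \ref{mres1} applied with $J=\mathcal{K}(X,Y^{\ast})\subset\mathcal{L}(X,Y^{\ast})$, combined with your (correctly oriented) observation that $\sigma(\mathcal{L}(X,Y^{\ast}),X\hat{\otimes}_{\pi}Y)$ is coarser than the weak topology, so weak$^{\ast}$-norm PC implies weak-norm PC on the unit sphere. Your gloss that $J$ is ``proper'' is not literally among the corollary's stated hypotheses, but this is harmless: the weak-norm PC argument in Theorem \ref{mres1} uses only the $\ell^{\infty}$-decomposition $X^{\ast\ast}=(J^{\ast})^{\perp}\oplus_{\infty}J^{\perp\perp}$ and the infinite dimensionality of $(J^{\ast})^{\perp}$ coming from the infinite codimension of $J$, and moreover an $M$-ideal $\mathcal{K}(X,Y^{\ast})\neq\mathcal{L}(X,Y^{\ast})$ is never an $M$-summand, so properness is automatic here.
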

We conclude with a version of Theorem $\ref{mres1}$ for weak$^\ast$-strongly extreme points.
\begin{prop}
Let $J \subset X^\ast$ be a proper $M$-ideal. Then $J$ has no weak$^\ast$-strongly extreme points of $B_{X^*}$.
\end{prop}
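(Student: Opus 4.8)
The plan is to argue by contradiction. Suppose $x_0^* \in J \cap S_{X^*}$ is a weak$^*$-strongly extreme point of $B_{X^*}$. By Theorem \ref{bb}, $x_0^*$ is then simultaneously an extreme point of $B_{X^*}$ and a Namioka point, and by Lemma \ref{l1} the Namioka property means $x_0^*$ admits a \emph{unique} norm-preserving extension from $X$ to $X^{**}$. I would use these two facts in tandem to promote $x_0^*$ to a preserved extreme point, and then collide this with the $M$-structure of $J$. Throughout I identify $x_0^*$ with its canonical image $\widehat{x_0^*} \in X^{***}$, noting that $\widehat{x_0^*}$ restricts to $x_0^*$ on $X \subset X^{**}$.

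The first key step is to show that $\widehat{x_0^*}$ is an extreme point of $B_{X^{***}}$. Indeed, if $\widehat{x_0^*} = \tfrac12(\Phi + \Psi)$ with $\Phi, \Psi \in B_{X^{***}}$, then restricting to $X$ gives $x_0^* = \tfrac12(\Phi|_X + \Psi|_X)$ with $\Phi|_X, \Psi|_X \in B_{X^*}$; extremality of $x_0^*$ in $B_{X^*}$ forces $\Phi|_X = \Psi|_X = x_0^*$, so both $\Phi$ and $\Psi$ are norm-preserving extensions of $x_0^*$ to $X^{**}$, whence $\Phi = \Psi = \widehat{x_0^*}$ by uniqueness. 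The second step invokes the $M$-ideal structure: since $J$ is an $M$-ideal in $X^*$, the bidual $J^{\perp\perp}$ is an $M$-summand of $X^{***}$, say $X^{***} = J^{\perp\perp} \oplus_\infty N$ with $N \cong (X^*/J)^{**}$. As $x_0^* \in J$, its image $\widehat{x_0^*}$ annihilates $J^\perp$ and so lies entirely in the summand $J^{\perp\perp}$, with norm $1$. Because $J \subsetneq X^*$, the complementary summand $N$ is nonzero; choosing $n \in N$ with $\|n\| = 1$ gives $\|\widehat{x_0^*} \pm n\| = \max\{1,1\} = 1$, so $\widehat{x_0^*} = \tfrac12\big((\widehat{x_0^*}+n)+(\widehat{x_0^*}-n)\big)$ is a nontrivial midpoint in $B_{X^{***}}$. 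This contradicts the extremality established in the first step, so no point of $J$ can be weak$^*$-strongly extreme.

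The conceptual heart of the argument, and the step I expect to require the most care, is the upgrade in the first paragraph: the combination of extremality with uniqueness of the Hahn–Banach extension (Lemma \ref{l1}) is exactly what turns $x_0^*$ from an extreme point of $B_{X^*}$ into a \emph{preserved} extreme point, that is, an extreme point of the bidual ball $B_{X^{***}}$. The remaining obstacle is purely structural bookkeeping — verifying that $J^{\perp\perp}$ is an $M$-summand of $X^{***}$ and that $\widehat{x_0^*}$ sits inside it — which I would settle by the standard duality theory of $M$-ideals in \cite{HWW}. I also note that this argument uses only that $J$ is a proper subspace which is an $M$-ideal, and not the failure to be an $M$-summand; the properness hypothesis serves mainly to align the statement with Theorem \ref{mres1}, since the conclusion persists trivially for $M$-summands, where such $J$ already contain no extreme points of $B_{X^*}$.
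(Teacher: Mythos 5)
Your proof is correct and takes essentially the same route as the paper: the paper likewise uses Lemma \ref{l1} to upgrade the weak$^*$-strongly extreme point $x_0^*$ to an extreme point of $B_{X^{***}}$ and then contradicts extremality via the nontrivial decomposition $X^{***} = (J^*)^\perp \bigoplus_{\infty} J^{\perp\perp}$, your explicit perturbation by $\pm n$ being exactly the fact that an extreme point of an $\ell^\infty$-sum must have all coordinates of norm one. The only nuance is terminological: in this paper ``proper $M$-ideal'' means an $M$-ideal that is not an $M$-summand (which still forces $J \neq X^*$, so your argument applies verbatim), and your closing observation correctly notes that the conclusion also holds trivially in the $M$-summand case.
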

\begin{proof}
We only indicate the modification in the proof of Theorem $\ref{mres1}$. Suppose $x^*_0 \in J$ is weak$^\ast$-strongly extreme point of $B_{X^*}$. As before, $X^{***} = (J^*)^\bot \bigoplus_{\infty} J^{\bot\bot}$ and the decomposition is non-trivial. Again, we have from Lemma $\ref{l1}$
 that $x^*_0 \in J^{\bot\bot}$ is an extreme point of $B_{X^{***}}$. This contradicts the fact that an extreme point in an $\ell^\infty$-sum must have all the coordinates as unit vectors. Thus $x_0^* \notin J$.
\end{proof}

\section{On density of Namioka points}

Let $X$ be a Banach space. We say that $X$ has Property $(N)$ if
the set of Namioka points  of $B_{X^*}$ is weakly dense in $S_{X^*}$ (see \cite{Ra3}). We note that if a dual space $X^\ast$ has Property $(N)$, then $X$ is reflexive.  For any infinite compact space $\Omega$, if $A \subset C(\Omega)$ is a closed subalgebra, separating points of $\Omega$, it was shown in \cite{Ra3}, if $A$ has Property $(N)$, then $A$ is isometrically and algebraically isomorphic to $c_0(\Gamma)$, the space of functions vanishing at infinity  on a discrete set $\Gamma$.  It is easy to see using Mazur's theorem that the weak and norm closure of a convex set are the same, that if $X$ has property $(N)$, then the dual unit ball is a norm closed convex hull of its extreme points.  In this section we note that Property $(N)$ is not hereditary answering a question raised in \cite{Ra3}. We recall that if weak$^*$ and weak topologies coincide on $S_{X^*}$, by Lemma $\ref{l1}$, we get that, under the canonical embedding of $X \subset X^{**}$, elements of $X^*$ have unique norm-preserving extension. Such spaces are called Hahn-Banach smooth spaces (see \cite{S}) and it was proved in \cite{BR} that being Hahn-Banach smooth is a hereditary property. Similarly if one only assumes that norm attaining functionals of $X^*$ have unique norm-preserving extensions, then by the Bishop-Phepls theorem \cite{H} and Lemma $\ref{l1}$, we see that the set of Namioka points is  norm dense in $S_{X^*}$.

\begin{prop}
Property $(N)$ is not hereditary.
\end{prop}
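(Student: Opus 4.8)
The plan is to exhibit a single pair $Y \subset X$ of Banach spaces, $Y$ a closed subspace, with $X$ having Property $(N)$ and $Y$ failing it. The basic translation tool is Lemma~\ref{l1}: for any space $Z$, a functional $z^* \in S_{Z^*}$ is a Namioka point of $B_{Z^*}$ exactly when it has a unique norm preserving extension to $Z^{**}$. Hence Property $(N)$ says that the set of functionals in $S_{Z^*}$ with a unique norm preserving extension is weakly dense in $S_{Z^*}$. For the ambient space I would obtain Property $(N)$ from the sufficient condition recorded just before the statement: if every norm attaining functional of $X^*$ has a unique norm preserving extension to $X^{**}$, then by the Bishop--Phelps theorem these functionals are norm dense in $S_{X^*}$, and by Lemma~\ref{l1} they are Namioka points, so the Namioka points are even norm (hence weakly) dense and $X$ has Property $(N)$.

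A constraint dictates the shape of the example. Since being Hahn--Banach smooth is hereditary and forces Property $(N)$ on every subspace, $X$ cannot be Hahn--Banach smooth: its Namioka points must be weakly dense without exhausting $S_{X^*}$, i.e.\ uniqueness of norm preserving extensions must hold densely but fail at some functional. A natural place to hunt is among the (non-$c_0$) isometric preduals of $\ell^1$. For such an $X$ one has $X^* = \ell^1$, which has the Schur property, so weak and norm density in $S_{\ell^1}$ coincide and Property $(N)$ reduces to \emph{norm} density of the unique-extension functionals. Since the finitely supported functionals are norm dense in $S_{\ell^1}$, it would suffice to arrange that each finitely supported functional has a unique norm preserving extension to $X^{**}$, while allowing uniqueness to fail for some functional of infinite support; the latter keeps $X$ off the Hahn--Banach smooth case, as required.

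For the subspace it is cleanest to arrange that $Y$ has \emph{no} Namioka points in $S_{Y^*}$ at all, or at least that the non-Namioka functionals contain a nonempty weakly open subset of $S_{Y^*}$; then the Namioka points cannot be weakly dense and $Y$ fails Property $(N)$. By Lemma~\ref{l1} this amounts to producing, on a weakly open slab of $S_{Y^*}$, a second norm preserving extension to $Y^{**}$ for every functional in the slab. The engine for manufacturing such competing extensions is the $M$-structure exploited in Section~4: if $Y$ is built so that $Y^{**}$ splits as an $\ell^\infty$-sum with a non-reflexive summand transverse to $Y$ --- as in Theorem~\ref{mres1}, where the non-reflexivity of $(J^*)^\perp$ supplies weak$^*$ but not weakly convergent nets --- then that summand furnishes the extra extensions and obstructs weak$^*$-to-weak continuity over a whole region.

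The main obstacle is reconciling these two demands across the inclusion $Y \subset X$ and, above all, controlling the \emph{weak} (not merely weak$^*$) topology of the duals, which is a statement about the full biduals. One must simultaneously keep the norm attaining functionals of $X^*$ uniquely extendable (to secure Property $(N)$ for $X$) yet force non-uniqueness on a genuinely weakly open piece of $S_{Y^*}$ (to break it for $Y$); promoting a merely nonempty family of non-Namioka functionals of $Y$ to a weakly open one, while the corresponding functionals on the larger space $X$ remain well behaved, is where I expect the real work to lie. Once the $M$-structure of the pair is fixed, the algebraic identification of the functionals carrying two extensions should be routine.
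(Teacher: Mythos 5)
There is a genuine gap: you never produce the example. What you give is a list of constraints an example would have to satisfy (the ambient space cannot be Hahn--Banach smooth, the subspace should have a weakly open region of non-Namioka functionals) together with a proposed toolbox ($\ell^1$-preduals, $M$-structure as in Theorem~\ref{mres1}), and you explicitly defer the decisive construction (``where I expect the real work to lie''). A proof of non-heredity must exhibit, or at least prove the existence of, a concrete pair $Y\subset X$; your proposal stops exactly at that point. Moreover, one of your reduction steps is unsound: you argue that for an isometric predual $X$ of $\ell^1$ the Schur property makes weak density in $S_{\ell^1}$ coincide with norm density. The Schur property controls \emph{sequences}, while weak density is a statement about the (non-sequential, non-metrizable) relative weak topology on the sphere; the relative weak closure of a subset of $S_{\ell^1}$ can strictly exceed its norm closure (indeed, in any infinite-dimensional space the sphere itself is weakly dense in the ball), so Property $(N)$ for such an $X$ does not reduce to norm density of the unique-extension functionals.

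The missing idea that makes the paper's proof short is a \emph{universal embedding} result that removes the need for any bespoke construction: by \cite[Corollary 2.8]{SM}, every Banach space embeds isometrically into a Banach space $Z$ whose weak$^*$-denting points of $B_{Z^*}$ are norm dense in $S_{Z^*}$; since weak$^*$-denting points are Namioka points, every such $Z$ has Property $(N)$. Now take any single space with \emph{no} Namioka points at all --- the paper uses $C([0,1])$, where the extremal structure of $B_{C([0,1])^*}$ together with the absence of isolated points in $[0,1]$ rules out Namioka points --- and embed it into such a $Z$. If Property $(N)$ were hereditary, every Banach space would have it, contradicting the case of $C([0,1])$. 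Note also that this settles your structural worry automatically: nothing forces you to verify that $Z$ fails Hahn--Banach smoothness or to control $Z^{**}$; the renorming theorem does all the work on the ambient side, and the subspace side only requires one space demonstrably without Namioka points. If you wish to salvage your program, the shortest route is to replace your $\ell^1$-predual/$M$-ideal machinery by this embedding theorem.
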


\begin{proof}
As noted in Section 1,
the set of weak$^*$-denting points of $B_{X^*}$ is contained in  set of Namioka points of $B_{X^*}$. Also every Banach space $X$ can be isometrically embedded into a Banach space $Z$ such that the set of weak$^*$-denting points of $B_{Z^*}$ is norm dense in $S_{Z^*}$ (see \cite[Corollary 2.8]{SM}). Therefore, it follows that every Banach space $X$ can be isometrically embedded into a Banach space $Z$ such that the set of Namioka points of $B_{Z^*}$ is weakly dense in $S_{Z^*}.$ Thus if Property $(N)$ were hereditary, then every Banach space would have Property $N$. It is easy to see that for the space of continuous functions,  $S_{C([0,1])^*}$ has no Namioka points. Hence, Property $(N)$ is not hereditary.
\end{proof}

\begin{rem}\label{rem1}
Unlike some of the investigations here, the main difficulty in dealing with Namioka points in a dual space $X^*$ is that they need not be preserved in the bidual, $X^{***}$ ( under the canonical embedding), see \cite{DMR}. On the other hand if $x^* \in B_{X^*}$ is a weak$^*$-strongly extreme point, by Lemma $\ref{l1}$ we see that $x^*$ is at least an extreme point of $B_{X^{***}}$. Let $X$ be a  Banach space such that under the canonical embedding in $X^{\ast\ast}$ it is an $M$-ideal. These are called $M$-embedded spaces. See \cite[Chapter III, VI]{HWW} 
for several examples of $M$-embedded spaces from function theory and operator theory. It was shown in \cite{R1} that for a non-reflexive $M$-embedded space $X$, any  $x^\ast \in S_{X^\ast}$ continues to be a Namioka point 
 of the unit ball of all higher order odd duals of $X$.
\end{rem}
Thus we have the following variation of Proposition $\ref{c1}$. For a non-reflexive space $X$ and a positive  integer $k>1$, we denote by $X^{(k)}$, the $k$th ordered dual of $X$. We again recall that since $Y$ is finite dimensional, ${\mathcal L}( X^{(k)},Y) = (X^{(k)}\hat{\otimes}_{\pi} Y^*)^* $.  The following theorem is easy to see.
\begin{thm} Let $X$ be a non-reflexive $M$-embedded space and let $Y$ be a finite dimensional space with a unitary $y^\ast_0 \in S_{Y^\ast}$. For any
$x^\ast_0 \in S(X^\ast)$, 
$x^\ast_0 \otimes y^\ast_0$ is a Namioka point  of $S_{{\mathcal L}(X^{(k)},Y)}$ for any even integer $k$.
\end{thm}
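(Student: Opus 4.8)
The plan is to reduce the statement to the unique–extension criterion already established in Proposition~\ref{c1}, applied with the higher dual $X^{(k)}$ playing the role of the domain space and the finite–dimensional $Y$ as codomain. The only hypothesis of that proposition that is not completely routine here is that the first component be a Namioka point of the dual of the domain; everything else will be handed to us by the finite–dimensionality of $Y$ and by the $M$-embeddedness of $X$.

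First I would record the structural facts that make the ambient setting coincide with that of Proposition~\ref{c1}. Since $Y$ is finite dimensional, so is $Y^\ast$; both are reflexive and have the approximation property, and every bounded operator from $X^{(k)}$ into $Y$ has finite rank, whence ${\mathcal L}(X^{(k)},Y)={\mathcal K}(X^{(k)},Y)$ for every $k$. Combined with the identification ${\mathcal L}(X^{(k)},Y)=(X^{(k)}\hat{\otimes}_{\pi}Y^\ast)^\ast$ recalled above, this places us exactly in the framework of Proposition~\ref{c1} with $X^{(k)}$ as domain, and the unitary $y_0^\ast$ supplies the spanning family of norming vectors that the extension argument of Proposition~\ref{c1} uses to propagate uniqueness from the first tensor factor to the whole elementary tensor. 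Thus, once the Namioka property of the first factor in the appropriate dual is in hand, re-running the argument of Proposition~\ref{c1} with $X^{(k)}$ in place of $X$ gives the conclusion.

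The substantive step is therefore to verify that $x_0^\ast$, a priori only a functional on $X$, is a Namioka point of $B_{(X^{(k)})^\ast}=B_{X^{(k+1)}}$ under the canonical embedding. This is precisely where the two standing hypotheses act. Because $k$ is even, $X^{(k+1)}$ is an odd dual of $X$, into which $X^\ast$ embeds isometrically, so $x_0^\ast\in S_{X^{(k+1)}}$. By Remark~\ref{rem1} (citing \cite{R1}), for a non-reflexive $M$-embedded space every point of $S_{X^\ast}$ remains a Namioka point in all higher odd duals; in particular $x_0^\ast$ is a Namioka point of $B_{X^{(k+1)}}$, equivalently, by Lemma~\ref{l1}, it has a unique norm-preserving extension to $X^{(k+2)}=(X^{(k)})^{\ast\ast}$. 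Feeding this into the extension argument as in Proposition~\ref{p1} yields that $x_0^\ast\otimes y_0^\ast$ is a Namioka point of $S_{{\mathcal L}(X^{(k)},Y)}$.

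I expect the higher-dual persistence of the Namioka property to be the only real obstacle. As Remark~\ref{rem1} stresses, a Namioka point of a dual space need not remain one under the canonical embedding into the next bidual, so one cannot simply quote that $x_0^\ast$ is a Namioka point of $B_{X^\ast}$. The $M$-embedded hypothesis, together with the parity of $k$ that keeps $X^{(k+1)}$ an odd dual, is exactly what rescues this step; after it, the remainder is a direct application of the machinery of Proposition~\ref{c1}, which is why the theorem is stated as easy to see.
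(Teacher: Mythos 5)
Your proposal is correct and follows exactly the route the paper intends: the paper offers no written proof (the theorem is declared ``easy to see'' immediately after Remark~\ref{rem1}), and your reconstruction supplies precisely the intended ingredients --- the persistence of the Namioka property of $x_0^\ast$ in the odd dual $X^{(k+1)}$ for non-reflexive $M$-embedded $X$ (Remark~\ref{rem1}, citing \cite{R1}, with the parity of $k$ used exactly where you use it), combined with the unique norm-preserving extension argument of Theorem~\ref{c1} run with $X^{(k)}$ as domain, the finite-dimensionality of $Y$ guaranteeing reflexivity, the approximation property and ${\mathcal K}={\mathcal L}$, and the unitary $y_0^\ast$ providing the spanning set that propagates uniqueness to the elementary tensor. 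Your identification of the higher-dual persistence step as the only non-routine point, with the rest reducing to the machinery of Theorem~\ref{c1} and Lemma~\ref{l1}, is exactly the authors' reading as well.
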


\section*{Acknowledgments}
The Part of this work was done when the second author was visiting Shiv Nadar Institute. She would like to express her deep
gratitude to everyone at Shiv Nadar Institute for the warm hospitality provided during her visit. She would also like  to sincerely thank Dr. Priyanka Grover for the opportunity to visit.
The research of T. S. S. R. K. Rao is supported by a 3-year
project ‘Classification of Banach spaces using differentiability’ funded by
the Anusandhan National Research Foundation (ANRF), Core Research Grant, CRG2023-000595. The research of the second author is financially supported by the Institute Postdoctoral Fellowship, NISER Bhubaneswar.
\vskip 1em

{\bf Data Availability} Not applicable.
\vskip 1em

{\bf Declarations}

{\bf Conflict of interest}  The authors have not disclosed any competing interests.

\bibliographystyle{plain, abbrv}

\end{document}